\documentclass[12pt,reqno]{amsart}
  \usepackage{latexsym} 
  \usepackage[all]{xy}
  \usepackage{amsfonts} 
  \usepackage{amsthm} 
  \usepackage{amsmath} 
  \usepackage{amssymb}
  \usepackage{pifont}  
  \usepackage{enumerate}
  \usepackage{dcolumn}
  \usepackage{comment}
  \usepackage{hyperref}  
\newcolumntype{2}{D{.}{}{2.0}}
  \xyoption{2cell}

 \usepackage{tikz}
\usetikzlibrary{arrows}



  \def\<{{\langle}} 
  \def\>{{\rangle}}

  \def\note#1{{}}

  \def\note#1{} 

  \def\beq{\begin{equation}} 
  \def\eeq{\end{equation}}

  \def\im{{\rm Im}}




  \newcounter{zlist} 
  \newenvironment{zlist}{\begin{list}{(\arabic{zlist})}{ 
  \usecounter{zlist}\leftmargin2.5em\labelwidth2em\labelsep0.5em 
  \topsep0.6ex
  \parsep0.3ex plus0.2ex minus0.1ex}}{\end{list}}

  \newcounter{blist} 
  \newenvironment{blist}{\begin{list}{(\alph{blist})}{ 
  \usecounter{blist}\leftmargin2.5em\labelwidth2em\labelsep0.5em 
  \topsep0.6ex 
  \parsep0.3ex plus0.2ex minus0.1ex}}{\end{list}} 

  \newcounter{rlist} 
  \newenvironment{rlist}{\begin{list}{(\roman{rlist})}{ 
  \usecounter{rlist}\leftmargin2.5em\labelwidth2em\labelsep0.5em 
  \topsep0.6ex 
  \parsep0.3ex plus0.2ex minus0.1ex}}{\end{list}}



\def\stac#1{\raise-.2cm\hbox{$\stackrel{\displaystyle\otimes}{\scriptscriptstyle{#1}}$}}

\def\cten#1{\raise-.2cm\hbox{$\stackrel{\displaystyle\widehat{\otimes}}
{\scriptscriptstyle{#1}}$}}

\textheight 22.4 cm
\textwidth 15.5cm
\topmargin -.25in \headheight 0.3in \headsep .5cm
\oddsidemargin .15in \evensidemargin .15in
\topskip 12pt

  \def\Label#1{\label{#1}\ifmmode\llap{[#1] }\else 
  \marginpar{\smash{\hbox{\tiny [#1]}}}\fi} 
  \def\Label{\label}

  \newtheorem{proposition}{Proposition}[section]
  \newtheorem{lemma}[proposition]{Lemma} 
  \newtheorem{corollary}[proposition]{Corollary} 
  \newtheorem{theorem}[proposition]{Theorem} 
  
\theoremstyle{definition} 
  \newtheorem{definition}[proposition]{Definition}
    
  \newtheorem{example}[proposition]{Example}

  \theoremstyle{remark} 
  \newtheorem{remark}[proposition]{Remark}

  \newcounter{c} 
  \renewcommand{\[}{\setcounter{c}{1}$$} 
  \newcommand{\etyk}[1]{\vspace{-7.4mm}$$\begin{equation}\Label{#1} 
  \addtocounter{c}{1}} 
  \renewcommand{\]}{\ifnum \value{c}=1 $$\else \end{equation}\fi} 
  \setcounter{tocdepth}{2} 
  
   \numberwithin{equation}{section}









\def\QQ{{\mathbb Q}}

\def\ZZ{{\mathbb Z}}

\newcommand{\gG}{\mathrm{G}}
\newcommand{\hH}{\mathrm{H}}

\newcommand{\qQ}{\mathrm{Q}}

\newcommand{\tT}{\mathrm{T}}

\newcommand{\Cc}{\mathcal{C}}

\def\*C{{}^*\hspace*{-1pt}{\Cc}}

\def\text#1{{\rm {\rm #1}}}





 \def\1{\mathbf{1}}

\def\lto{\longmapsto}
\def\lra{\longrightarrow}

\def\1\mathbf{1}
\def\bfa{\mathbf{a}}
\def\bfb{\mathbf{b}}
\def\bfc{\mathbf{c}}
\def\bfq{\mathbf{q}}

\def\|#1{\overline{#1}}
\def\Abs{\mathrm{Abs}}

\begin{document}

\title{From pre-trusses to skew braces}

\author{Tomasz Brzezi\'nski}

\address{
Department of Mathematics, Swansea University, 
Swansea University Bay Campus,
Fabian Way,
Swansea,
  Swansea SA1 8EN, U.K.\ \newline \indent
Chair of Algebra and Geometry, Faculty of Mathematics, University of Bia{\l}ystok, K.\ Cio{\l}kowskiego  1M,
15-245 Bia\-{\l}ys\-tok, Poland}

\email{T.Brzezinski@swansea.ac.uk}

\author{Stefano Mereta}

\address{
  Department of Mathematics, Swansea University, 
  Swansea University Bay Campus,
  Fabian Way,
  Swansea SA1 8EN, U.K. \newline \indent
  Institut Fourier,
  Universit\'e Grenoble Alpes,
  100 rue des maths,
  38610 Gi\'eres, France}

\urladdr{https://sites.google.com/view/stefanomereta/home}

\email{988913@Swansea.ac.uk, stefano.mereta@univ-grenoble-alpes.fr}

\author{Bernard  Rybo{\l}owicz}

\address{
Department of Mathematics, Swansea University, 
Swansea University Bay Campus,
Fabian Way,
Swansea,
  Swansea SA1 8EN, U.K.}

\urladdr{https://sites.google.com/view/bernardrybolowicz/}
\email{Bernard.Rybolowicz@swansea.ac.uk}

\subjclass[2010]{16Y99; 08A99}

\keywords{Pre-truss; near-truss; heap; skew brace; near-ring}

\begin{abstract}
The notion of a pre-truss, that is, a set that is both a heap and a semigroup is introduced. Pre-trusses themselves as well as  pre-trusses in which one-sided or two-sided distributive laws hold are studied. These are termed near-trusses and skew trusses respectively. Congruences in pre-trusses are shown to correspond to paragons defined here as sub-heaps satisfying particular closure property. Near-trusses corresponding to skew braces and near-rings are identified through their paragon and ideal structures. Regular elements in a pre-truss are defined leading to the notion of a (pre-truss) domain. The latter are described as quotients by completely prime paragons, also defined hereby. Regular pre-trusses as domains that satisfy the Ore condition are introduced and the pre-trusses of fractions are defined. In particular, it is shown that near-trusses of fractions without an absorber correspond to skew braces.
\end{abstract}    
\date\today
\maketitle

\section{Introduction}
In the 1920s H.\ Pr\"ufer and R.\ Baer defined a heap as an algebraic system consisting of a set with a ternary operation which fulfils conditions that allow one to associate an isomorphic group to every element; conversely, every group gives rise to a heap by taking  the operation $(a,b,c)\mapsto ab^{-1}c$  (see \cite{Bae:ein} and \cite{Pru:the}).
In 2007 W.\ Rump introduced braces as algebraic systems corresponding to solutions of set-theoretic  Yang-Baxter equations \cite{Rum:bra}. A brace is a triple $(G,+,\cdot)$ where $(G,+)$ is an Abelian group, $(G,\cdot)$ is a group and the following distributive law holds, for all $a,b,c\in G$,
$$
a\cdot(b+c)=a\cdot b-a+a\cdot c;
$$
see \cite{CedJes:bra}. Through their connection with set-theoretic Yang-Baxter equations, braces have become an intensive field of studies.  In particular it has been shown that a brace allows one to construct a non-degenerate involutive set-theoretic solution of Yang-Baxter equation (see for example, \cite{CedJes:bra}, \cite{Rum:bra}, \cite{CIS:LB} and \cite{A:YBB}). In 2017 L.\ Guarnieri and L.\ Vendramin introduced the notion of a skew brace. This is a generalisation of a brace in which $(G,+)$ is not required to be Abelian \cite{GuaVen:ske}, it has been shown to  correspond  bijectively to non-degenerate solutions of the set-theoretical Yang-Baxter equation (see e.g.\ \cite{Bachiller}). 
In recent years there has been a vast progress in the research on solutions of set-theoretical Yang-Baxter equations,  
but, even though we know that every skew brace provides us with such a solution, it is not an easy task to construct skew braces (for a list of problems on skew braces and a literature review see \cite{Ven:pro}). In 2018 the first author observed that it is possible to unify the distributive laws of rings and braces in a single more general algebraic structure which he called truss \cite{Brz:tru}. A left skew truss $T$ is a heap $(T,[-,-,-])$  with an additional  binary operation $\cdot:T\times T\to T$ which is associative and which distributes over the ternary operation from the left, i.e., for all $a,b,c,d\in T$, 
$$
a\cdot [b,c,d]=[a\cdot b,a\cdot c,a\cdot d].
$$
Every skew brace can be associated with an appropriate skew left truss: in this text we will call such trusses brace-type trusses.  This leads to the main questions that motivated the present article. What are exactly brace-type trusses? How to construct them starting from a not necessarily brace-type truss? When is such a construction possible? In the paper we present two approaches to answer questions of this kind. The first approach is to take quotients of trusses by some special congruence and the second one relies on a localisation procedure. The paper is organised as follows. 

Section~\ref{sec:2} contains definitions and facts about near-rings, skew braces and heaps which we believe to be necessary to make the paper self-contained. The section concludes with Lemma~\ref{lem:map} which describes fully all equivalence classes for a sub-heap relation $\sim_S$ as mutually isomorphic heaps with an explicitly given isomorphism in each case.

Section \ref{sec:3} starts with the introduction of pre-trusses, near-trusses and skew trusses. A pre-truss is a heap with an additional semigroup operation. A near-truss is a pre-truss in which the semigroup operation  distributes over the ternary operation from the left. The best known examples of these objects are near-trusses with left absorbers associated with near-rings (see Example~\ref{ex:near-ring}) or unital near-trusses which can be associated to recently introduced skew-rings \cite{Rum:set} (see Example~\ref{ex:skew-ring}). The notion of near-truss was introduced in \cite[Definition 2.1]{Brz:tru} under the name of a skew left truss; the present terminology is intended to be coherent with that of the near-ring theory. Another example of a near-truss which is of particular interest is a near-truss associated with a skew brace (see Example~\ref{ex:skew-brace}); these near-trusses are said to be brace-type. Finally, a skew truss is a near-truss for which the right distributive law holds.
The first part of Section~\ref{sec:3} is focused on the characterisation of algebraic structures that correspond to congruences in a pre-truss. For that, we give the definition of a paragon as a normal sub-heap, the equivalence classes of which have a particular closure property, and in Theorem~\ref{thm.par} we show that paragons fully describe all the congruences in a pre-truss. We conclude this theorem with Corollary~\ref{cor:nrec} and Corollary~\ref{cor.brace.par} which tell us that congruence equivalence classes in near-rings and skew braces are in fact paragons in the associated near-trusses. After that, we introduce the definition of an ideal to determine whether a unital near-truss is associated with a skew brace or a near-ring, Proposition~\ref{lem:sim-rb}. 

Combining the most natural concept of a maximal paragon with the analysis of the ideal structure of a truss we give a full description of those paragons whose quotient is a brace-type near-truss. More precisely in Theorem~\ref{thm:main3} we show that a quotient near-truss is brace-type if and only if  all equivalence classes are not subsets of any ideal in a near-truss. We conclude this section with two examples  of paragons that fulfil the hypothesis of the theorem.

	Section~\ref{sec:4} focuses on the study of domains. We start with the definition of a regular element in a pre-truss, then we define a domain as a pre-truss for which all elements except the absorbers are regular. In Lemma \ref{lem:cancel} we justify the definition by showing that domains are exactly pre-trusses for which the cancellation property holds. After that, we introduce the notion of completely prime paragon. 
	Since the definition of a completely prime ideal in a ring depends on an absorber and near-trusses do not necessarily have an absorber, one should expect that the definition of a completely prime paragon does not depend on it. Therefore, we fix the absence of an absorber by using ideals in the quotient, see Definition~\ref{def:prime}.  The most important result of this section is Theorem~\ref{thm:dom-prime} stating that the quotient of a pre-truss by a paragon is a domain if and only if the paragon is completely prime. We conclude this section with an example of a completely prime paragon in the truss of polynomials with integer coefficients summing up to an odd number.
	
	The aim of Section~\ref{sec:5} is to devise a method of constructing brace-type near-trusses by localisation. We start the section with  Definition~\ref{def:reg} of a left regular pre-truss. Then we describe localisation of pre-trusses. Perhaps, the most important result of this section is Corollary \ref{cor:qb} which states that if we localise a regular near-truss with no absorbers  we will obtain a skew brace. This is supplemented with an example: the localisation of a non-commutative truss of square integer matrices with odd diagonal and even off-diagonal entries.

\section{Near-rings, skew-rings, skew braces and heaps}\label{sec:2}

In this section we gather preliminary information and fix the notation and conventions on near-rings, skew braces and heaps.

\subsection{Near-rings, skew-rings and skew braces}\label{sec.near-ring} A \textbf{
 near-ring} (see \cite{Pil:nea}) is a set $N$ with two associative binary operations $+,\cdot$, such that $(N,+)$ is a group and, for all $n,m,m'\in N$, 
$$
n(m+m')=nm+nm'.
$$ 
Analogously to the case of rings a \textbf{near-field} is a near-ring such that $(N\setminus\{0\},\cdot)$ is a group, where $0$ is the neutral element for $+$. 

A \textbf{homomorphism of near-rings} is a function $f:N\to N'$  that  commutes with both near-ring operations, that is, for all $a,b\in N$,
$$
f(ab)=f(a)f(b)\quad \&\quad f(a+b)=f(a)+f(b).
$$

A \textbf{skew-ring} \cite[Definition~3 \& Corollary]{Rum:set} is a triple $(B,+,\cdot)$, where $(B,+)$ is a group  $(B,\cdot)$ is a monoid and the following distributive law holds 
$$
a(b+c)=ab-a+ac, 
$$
for all $a,b,c\in B$.
A skew-ring $(B,+,\cdot)$ in which $(B,\cdot)$  is a group is called a \textbf{skew left 
brace} \cite{GuaVen:ske}. 
A \textbf{homomorphism of skew braces} is a function that commutes with both group operations. A close connection between skew left braces and near-rings is revealed in \cite[Proposition~2.20]{SmoVen:skew}, which states that any construction subgroup of a near-ring is a skew left brace. In what follows, we drop the adjective `left', and hence skew brace means skew left brace\footnote{Obviously, `right' versions of all the notions discussed in this text can be defined and developed symmetrically, and in fact in \cite{Rum:set} Rump gives the definition of a skew-ring  in the right-sided convention.}. 
An \textbf{ideal} in a skew brace $B$ is a subset $B'\subset B$ such that $(B',+)$ is a normal subgroup, $a B'=B'a$ and $ab-a\in B'$, for all $a\in B$ and $b\in B'$.

\subsection{Heaps} A \textbf{heap} is a set $H$ together with a ternary operation,
 $$
[-,-,-]:H\times H\times H\lra H,
$$ 
that is associative and satisfies Mal'cev identities. Explicitly this means that, for all $a_{1},a_{2},a_{3},a_{4},a_{5}\in H$,
$$
[a_{1},a_{2},[a_{3},a_{4},a_{5}]]=[[a_{1},a_{2},a_{3}],a_{4},a_{5}] \quad \& \quad 
[a_{1},a_{1},a_{2}]=a_{2}=[a_{2},a_{1},a_{1}].
$$ 
These conditions imply that, for all $a_{1},a_{2},a_{3},a_{4},a_{5}\in H$,
\begin{equation}\label{assoc+}
[[a_{1},a_{2},a_{3}],a_{4},a_{5}] =[a_{1},[a_{4},a_{3},a_{2}],a_{5}]. 
\end{equation}
We say that $H$ is an \textbf{Abelian heap} if, for all $a,b,c\in H$, $[a,b,c]=[c,b,a]$. 

A \textbf{homomorphism of heaps} is a map that commutes with the ternary operations, that is, $f:H\lra H'$ is a heap morphism if, for all $a,b,c\in H$, 
$$f([a,b,c])=[f(a),f(b),f(c)].
$$ 

Every non-empty heap can be associated with a group by fixing the middle entry of the ternary operation, that is, for all $a\in H$, $+_{a}:=[-,a,-]$ is a group operation on $H$. This group 
is called a \textbf{retract} of $H$ at $a$ and is denoted by $\gG(H;{a})$. Retracts at two different elements are isomorphic. Starting with a group $G$ one can assign a heap to it  by setting $[a,b,c]:=ab^{-1}c$, for all $a,b,c\in G$. This \textbf{heap associated to a group} $G$ will be denoted by $\hH(G)$. 

A subset  $S$ of a heap $H$ is a \textbf{sub-heap} if it is closed under the heap operation of $H$. 
A non-empty sub-heap $S$ of a heap $H$ is said to be \textbf{normal} if there exists $e\in S$  such that, for all $a\in H$ and $s\in S$, there exists $t\in S$ such that 
$[a,e,s]=[t,e,a].$ This is equivalent to say that for all $a\in H$ and $e,s\in S$, $[[a,e,s],a,e]\in S$.
Every sub-heap of an Abelian heap is normal. The retract of a normal sub-heap at an element $e$ is a normal subgroup of the retract of the heap at the same element $e$. Furthermore, for any heap homomorphism $f:H\lra H'$ and any $b\in \im f$, $f^{-1}(b)$ is a normal sub-heap of $H$; see e.g.\  \cite[Lemma~2.12]{Brz:par}.

If $S$ is a sub-heap of $H$, then the relation $\sim_{S}$ on $H$ given by 
$$
a\sim_{S}b \iff \exists{s\in S}\; [a,b,s]\in S\iff \forall{s\in S}\; [a,b,s]\in S
$$
is an equivalence relation. 
The set of equivalence classes is denoted by $H/S$. The equivalence class of any $s\in S$ is equal to $S$. If $S$ is a normal sub-heap, then $\sim_{S}$ is a congruence and thus the canonical map $\pi:H\lra H/S$ is a heap epimorphism; see \cite[Proposition~2.10]{Brz:par}.

The following lemma summarises properties of the sub-heap equivalence relation and gives an explicit description of all equivalence classes and relations between them.

\begin{lemma}\label{lem:map}
Let $S$ be a non-empty sub-heap of $(H,[-,-,-])$, and consider the sub-heap relation $\sim_S$. 
 \begin{zlist}
  \item For all $a,b\in H$, define the {\rm\bf translation map}:
 \begin{equation}\label{trans}
 \tau_a^b: H\lra H, \qquad z\lto [z,a,b].
 \end{equation}
 \begin{rlist}
 \item The map $\tau_a^b$ is an isomorphism of heaps.
 \item The equivalence classes of $\sim_S$ are related by the formula:
 $$
 \bar{b} = \tau_a^b(\bar{a}) = \{[z,a,b]\; |\; z\sim_S a\}.
 $$
\item For all $e\in S$ and $a\in H$, set
$
S_e^a := \tau_e^a(S).
$
Then $\bar{a} = S_e^a$.
 \end{rlist}
 \item For all $a\in H$, the equivalence class $\bar{a}$ is a sub-heap of $H$. Furthermore, if $S$ is a normal sub-heap of $H$, then so are the $\bar{a}$.
 \item Equivalence classes of $\sim_S$ are mutually isomorphic as heaps.
 \item For all $a\in H$, the sub-heap equivalence relation $\sim_S$ coincides with the sub-heap equivalence relation $\sim_{\bar{a}}$. Consequently $H/S = H/\bar{a}$.
 \end{zlist}
 \end{lemma}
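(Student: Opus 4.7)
The plan is to establish parts $(1)$--$(4)$ in sequence, where $(1)$ does most of the heavy lifting and the remaining parts are essentially immediate consequences.

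For $(1)(i)$, I would verify directly that $\tau_a^b$ is a heap homomorphism by expanding $[[x,a,b],[y,a,b],[z,a,b]]$ and repeatedly applying \eqref{assoc+}, associativity and the Mal'cev identities to collapse it to $[[x,y,z],a,b]$; the key cancellations are $[b,b,a]=a$ and $[a,a,b]=b$. For bijectivity, $\tau_b^a$ is the two-sided inverse: $\tau_b^a(\tau_a^b(z)) = [[z,a,b],b,a] = [z,a,[b,b,a]] = z$, and similarly on the other side. For $(1)(ii)$, given $z\sim_S a$ and any $s\in S$, the computation $[[z,a,b],b,s]=[z,a,[b,b,s]]=[z,a,s]\in S$ shows $[z,a,b]\sim_S b$, so $\tau_a^b(\bar{a})\subseteq \bar{b}$; conversely, any $w\sim_S b$ is recovered as $w=\tau_a^b([w,b,a])$ with $[w,b,a]\sim_S a$ by the same kind of calculation. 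Part $(1)(iii)$ then follows: since $\bar{e}=S$ for $e\in S$, $(1)(ii)$ applied with $a:=e$ and $b:=a$ gives $\bar{a}=\tau_e^a(S)=S_e^a$.

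Parts $(2)$ and $(3)$ drop out almost for free. The class $\bar{a}=\tau_e^a(S)$ is the image of a sub-heap under a heap isomorphism by $(1)(i)$ and $(1)(iii)$, hence a sub-heap. For normality, I would use the equivalent formulation quoted in the excerpt, namely that $S$ is normal iff $[[h,e,s],h,e]\in S$ for all $h\in H$ and $e,s\in S$: if $S$ is normal and $a\in H$, take $a$ itself as the distinguished element of $\bar{a}$ and, given $t=[s,e,a]\in\bar{a}$ with $e,s\in S$, rewrite $[[h,a,t],h,a]$ using associativity and the Mal'cev identities as $\tau_e^a([[h,e,s],h,e])$, which lies in $\tau_e^a(S)=\bar{a}$ by normality of $S$. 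Part $(3)$ is then immediate, since $\tau_a^b$ restricts to a heap isomorphism $\bar{a}\to\bar{b}$ by $(1)(i)$ and $(1)(ii)$.

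Finally, for $(4)$, the two implications are direct given the description $\bar{a}=\tau_e^a(S)$ with $e\in S$. If $x\sim_S y$, then for any $s'=[s,e,a]\in\bar{a}$ with $s\in S$ one has
\[
[x,y,s']=[x,y,[s,e,a]]=\tau_e^a([x,y,s])\in\tau_e^a(S)=\bar{a},
\]
so $x\sim_{\bar{a}} y$. Conversely, if $x\sim_{\bar{a}}y$, testing against $a\in\bar{a}$ gives $[x,y,a]=[s,e,a]$ for some $s\in S$, and applying $\tau_a^e$ to both sides yields $[x,y,e]=s\in S$, so $x\sim_S y$. Equality $H/S=H/\bar{a}$ is then just the equality of equivalence relations. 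The only step that requires any real care is the normality claim in $(2)$, since the tidy identification $\bar{a}=\tau_e^a(S)$ must be combined with the Mal'cev identities to transport the normality condition across $\tau_e^a$; everything else is a straightforward application of $(1)$.
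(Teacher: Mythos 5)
Your proof follows essentially the same route as the paper's: direct verification that $\tau_a^b$ is a heap isomorphism with inverse $\tau_b^a$, the computation $[\tau_a^b(z),b,s]=[z,a,s]$ for $(1)(ii)$, transport of normality along $\tau_e^a$ for $(2)$, and the identification $\bar a=\tau_e^a(S)$ for $(3)$ and $(4)$; the paper is merely terser in $(2)$ (``heap isomorphisms preserve normality'') and does only one direction of $(4)$ explicitly as a chain of equivalences. One intermediate identity in your part $(2)$ is, however, wrong as written: for $t=[s,e,a]$ one has $[[h,a,t],h,a]=\tau_e^a\bigl([[h',e,s],h',e]\bigr)$ with $h'=\tau_a^e(h)=[h,a,e]$, not $\tau_e^a\bigl([[h,e,s],h,e]\bigr)$ (check in a group heap: with $e=1$ your right-hand side is $hsh^{-1}a$ while the left-hand side is $h(a^{-1}sa)h^{-1}a$, which differ whenever $a$ and $s$ do not commute). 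The conclusion is unaffected, since the normality condition for $S$ is quantified over all elements of $H$ and so applies to $h'$ just as well as to $h$; this is exactly the general statement that the image of a normal sub-heap under a heap \emph{automorphism} is normal, which requires pulling $h$ back through the automorphism. With that one substitution your argument is complete and matches the paper's.
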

 
 \begin{proof}
 (1) (i) First we need to check that $\tau_a^b$ preserves the ternary operation. Using the associativity and Mal'cev identities, we can compute, for all $z,z',z'' \in H$,
 $$
 \begin{aligned}
\left[\tau_a^b(z),\tau_a^b(z'),\tau_a^b(z'') \right] &=\left[ [z,a,b],[z',a,b],[z'',a,b] \right]
  = \left[z,a, \left[b,[z',a,b],[z'',a,b]\right]\right]  \\
 & = \left[z,a, \left[[b,b,a],z',[z'',a,b]\right]\right] \qquad \mbox{(by equation \eqref{assoc+})}\\
    & = \left[[z,z',z''],a,b]\right]    = \tau_a^b([z,z',z'']).
\end{aligned}
 $$
 Therefore, the $\tau_a^b$  preserve ternary operations and thus each one of them is a homomorphism of heaps. The inverse of $\tau_a^b$ is $\tau_b^a$.
 
 (1)(ii) Assume that $z\sim_S a$, that is, that $[z,a,s]\in S$, for all $s\in S$. If $z'=\tau_a^b(z) = [z,a,b]$, then
 $
 [z',b,s] = [z,a,s],
 $
 by the associativity and the Mal'cev property. Hence $z'\sim_S b$, that is, $\tau_a^b(\bar{a})\subseteq \bar{b}$. On the other hand, if $z'\in \bar{b}$, then set $z=\tau_b^a(z') = [z',b,a]$. Since $\tau_b^a$ is the inverse of $\tau_a^b$, $z' = \tau_a^b(z)$. Furthermore, for all $s\in S$,
 $[z,a,s] = [z',b,s]$,
 and so $[z,a,s]\in S$, since $z'\sim_S b$. This proves the second inclusion $\bar{b} \subseteq \tau_a^b(\bar{a})$, and hence the required equality.
 
  Assertion (1)(iii) follows by 1(ii) and the fact that $\bar{e}=S$.
 
 Statement (2) follows by (1) and the observation that heap isomorphisms preserve the normality. Statement (3) is a straightforward consequence of (1) and (2).
 
 (4)  Using (1)(iii)  we can argue as follows: $b\sim_Sc$ if, and only if, there exist $s,s'\in S$ such that $[b,c,s]=s'$. This is equivalent to the equality $[[b,c,s],e,a] = [s',e,a]$, for any $a\in H$ and $e\in S$, which, by associativity, is equivalent to $[b,c,[s,e,a]] = [s',e,a]$. The fact that $\bar{a} = S_e^a$  implies that $b\sim_{\bar{a}} c$. 
 \end{proof}

 \section{Quotient pre-trusses, near-trusses and skew braces}\label{sec:3}
 The aim of this section is to characterise heaps with an additional monoid operation that yield skew braces. Let us first introduce the appropriate terminology.
 
 \begin{definition}\label{def.truss}~
 
 \begin{zlist}
 \item A \textbf{pre-truss} is a heap $(T, [-,-,-])$ together with an associative binary operation (denoted by juxtaposition of elements or by $\cdot$). 
 \item A pre-truss $T$ satisfying the left distributive law:
 $$ 
a[b,c,d]=[ab,ac,ad], \qquad \mbox{for all $a,b,c,d\in T$},
$$
is called a \textbf{near-truss}.
\item A near-truss $T$ satisfying the right distributive law
$$ 
 [b,c,d]a=[ba,ca,da], \qquad \mbox{for all $a,b,c,d\in T$},
$$ 
is called a \textbf{skew truss}.
\item A skew truss such that the underlying heap is Abelian  is called a \textbf{truss}.
\end{zlist}
Every one of the above notions is said to be \textbf{unital} provided the binary operation has an identity (denoted by 1). 

A homomorphism of (pre-, near-, skew) trusses is a homomorphism of heaps that is also a homomorphism of semigroups (or monoids in the unital case).
\end{definition}

It is clear from this definition that the image of a homomorphism of (pre-, near-, skew) trusses  is itself a  (pre-, near-, skew) truss. 

\begin{remark}
Except for a pre-truss all the notions listed in Definition~\ref{def.truss} have been introduced in \cite{Brz:tru} and \cite{Brz:par}. Note, however, that the terminology introduced there  was motivated by braces, and thus what we call a near-truss here was named  a left skew truss there. In this paper we are adopting a terminology more aligned with the ring (or near-ring) theory one. Of course, a right distributive  version of a near-truss can be considered, but in line with the convention of Section~\ref{sec.near-ring} we only consider the left distributive version (with no qualifier).
\end{remark}

A \textbf{left (resp.\ right) absorber} is an element $a$ of a pre-truss $T$ such that, for all $t\in T$, $ta=a$ (resp.\ $at=a$). We say that $a$ is \textbf{an absorber} if it is a left and right absorber.  It is worth noting that if a pre-truss $T$ has both a left and a right absorber, then they necessarily coincide, in particular an absorber is unique.  We denote by $T^{\Abs}:= T\setminus\{a\}$, if $a$ is the unique absorber with tacit understanding that  $T^{\Abs}= T$ when $T$ has no absorbers. Furthermore, since homomorphisms of pre-trusses preserve multiplication, if $f:T\lra T'$ is a morphism and $e$ is a left (resp.\ right) absorber in $T$, then $f(e)$ is a left (resp.\ right) absorber in the pre-truss $f(T)$.

\begin{example}
	If $T$ is a truss  that has an absorber, then $T$ is a \textbf{ring-type truss}. This means that by taking the retract of $T$ at the absorber, say $0$,  we obtain  a ring $(T, +_{0},\cdot)$. 
	
	Conversely, if $R$ is a ring then one can associate to it the truss $(\hH(R),\cdot)$ with absorber $0$. This truss is denoted by $\tT(R).$ If $R$ is unital, then $\tT(R)$ is unital. Observe that if we start with a ring $R$, we assign to it the truss $\tT(R)$ and then take the retract we necessarily  obtain $R$ again, since  the absorber is unique.
\end{example}

\begin{example}\label{ex:near-ring}
	Let $T$ be a near-truss such that there exists a left absorber $e$. Then  a near-ring can be associated to $T$ by taking the retract of the heap $T$ at $e$ to obtain $(T,+_{e},\cdot)$. We call such $T$ a \textbf{ring-type near-truss}
	
		Conversely, if $N$ is a near-ring then one can associate to it the near-truss $(\hH(N),\cdot)$ which we will denote by $\tT(N)$. In contrast to rings, since left absorbers are not unique, if one associates a near-truss $\tT(N)$ to $N$ and then take the retract at a left absorber, then not necessarily one obtains $N$. 
\end{example}

\begin{example}\label{ex:skew-ring}
Let $T$ be a unital near-truss. Then  a skew-ring can be associated to it by taking the retract of the heap $T$ at the identity $1$ of the multiplication, that is $(T,+_1,\cdot)$ is a skew-ring.

Conversely, if $B$ is a skew-ring, then one can assign to it the unital near-truss $(\hH(B),\cdot)$ which we will denote by $\tT(B)$. Observe that if we start with a skew-ring $B$, we assign to it the truss $\tT(B)$ and then take the retract at the identity, we obtain the same skew-ring as identity is unique.
\end{example}

Recall from \cite{Rum:set} that an element $u$ in a skew-ring $B$ is called a \textbf{unit} if, 
for all $a\in B$, 
$$
a\cdot u = a+u+a.
$$

\begin{lemma}\label{lem:skew-ring}
Units in a skew-ring $B$ are in one-to-one correspondence with left absorbers in the  associated unital near-truss $\tT(B)$. 
\end{lemma}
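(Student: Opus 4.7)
The lemma asserts a bijection between units of $B$ and left absorbers of the associated unital near-truss $\tT(B)$. My plan is to exhibit this bijection explicitly and verify both directions by short group-theoretic computations. The first preparatory observation is that in every skew-ring one has $a \cdot 0 = a$: applying the left distributive law to $a \cdot 0 = a \cdot (0 + 0)$ gives $a \cdot 0 = a \cdot 0 - a + a \cdot 0$, and cancellation in $(B,+)$ yields $a \cdot 0 = a$ for every $a \in B$. Consequently $0$ is \emph{not} a left absorber of $\tT(B)$ (except trivially), so the bijection cannot be the identity map.

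Guided by that identity, I would propose the assignment $u \mapsto -u$ (additive inversion in $(B,+)$). To show $-u$ is a left absorber whenever $u$ is a unit, I compute $a$ in two ways using $a = a \cdot 0$ together with the decomposition $0 = u + (-u)$:
\[
a = a \cdot (u + (-u)) = a \cdot u - a + a \cdot (-u) = (a + u + a) - a + a \cdot (-u) = a + u + a \cdot (-u),
\]
where the unit condition $a \cdot u = a + u + a$ is used in the third equality. Left-cancelling $a$ and then $u$ in $(B,+)$ gives $a \cdot (-u) = -u$, so $-u$ is a left absorber. For the converse, if $e$ is a left absorber, the symmetric chain $a = a \cdot 0 = a \cdot (e + (-e)) = e - a + a \cdot (-e)$ can be solved for $a \cdot (-e)$ by pre-pending the inverse $a - e$ of $e - a$, yielding $a \cdot (-e) = (a - e) + a = a + (-e) + a$; hence $-e$ is a unit. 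Since $x \mapsto -x$ is an involution on $B$, these two assignments are mutual inverses, yielding the required bijection.

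The step that takes a moment of reflection is conceptual rather than technical: one has to recognise that the correspondence is additive inversion and not the identity. This is forced by the skew-ring peculiarity $a \cdot 0 = a$, which prevents $0$ from being absorbing and pairs each left absorber $e$ with the unit $-e$; the unit condition $a \cdot u = a + u + a$ is then seen to be precisely the algebraic identity needed for $-u$ to act as a left absorber in $\tT(B)$. The remaining manipulations are routine, the only care required being the ordering of summands in the possibly non-Abelian group $(B,+)$.
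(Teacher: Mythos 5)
Your proof is correct and establishes the same bijection as the paper, which simply states the correspondence as $u=[1,e,1]$ without verification: since applying the distributive law with $a=1$ forces $0=1$ in any skew-ring, $[1,e,1]=1-e+1=-e$, so your map $u\mapsto -u$ is literally the paper's map written additively. Your computations with $a\cdot 0=a$ and the decomposition $0=u+(-u)$ supply exactly the verification the paper omits.
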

\begin{proof}
The correspondence is given by 
$
u = [1,e,1].
$
That is, $u$ is a unit in $B$ (resp.\ absorber in $\tT(B)$)  provided $e$ is an absorber in $\tT(B)$ (resp.\ unit in $B$).
\end{proof}

\begin{remark}\label{rem:skew-ring}
Combining Example~\ref{ex:near-ring} with Example~\ref{ex:skew-ring} and Lemma~\ref{lem:skew-ring} we are led to the correspondence between skew-rings with units and unital near-rings. If $u$ is a unit in a skew-ring $B$, then $(T(B), +_{[1,u,1]}, \cdot , 1)$ is a unital near-ring, and vice versa, if  $(N,+,\cdot, 1)$ is a unital near-ring with zero $e$, then $(T(N), +_1, \cdot)$ is a skew-ring with unit $[1,e,1] = -e$ (cf.\ \cite[Example~3]{Rum:set}). This correspondence is seemingly different from the one described in 
\cite[Proposition~2]{Rum:set} as it changes the additive structure keeping the multiplication fixed, while in \cite[Proposition~2]{Rum:set} one considers a new multiplication with addition unchanged. However, if $e$ is a left absorber in a unital truss $(T,[-,-,-],\cdot, 1)$, then using the translation heap automorphism \eqref{trans} one can induce a new associative product on $T$ by the formula:
$$
a*_eb = \tau_e^1\left(\tau_1^e(a)\cdot \tau_1^e(b)\right), \qquad \mbox{for all $a,b\in T$}.
$$
Then $(T,[-,-,-],*_e, [1,e,1])$ is a unital near-truss isomorphic to $(T,[-,-,-],\cdot, 1)$ in which $1$ is a left absorber. Consequently, $(T,+_1,*_e)$  is a unital near-ring corresponding to the skew-ring $(T,+_1,\cdot)$. In particular, if $B$ is a skew-ring with unit $u$, then $(T(B),+_1,*_{[1,u,1]}, u) = (B,+,*_{-u}, u)$ is the unital near-ring described in \cite[Proposition~2]{Rum:set}.
\end{remark}

\begin{example}\label{ex:skew-brace}
	Let $T$ be a near-truss such that $(T,\cdot)$ is a group with neutral element $1$. Then  $(T,+_{1},\cdot)$ is a skew brace. We call such $T$ a \textbf{brace-type near-truss}.

	Conversely, if $B$ is a skew brace, then one can assign to it the near-truss $(\hH(B),\cdot)$ which we will denote by $\tT(B)$. As was the case with the skew-rings, if we start with a skew brace $B$, assign to it the truss $\tT(B)$ and then take the retract at identity, we obtain the same skew brace.
\end{example}

Our goal is to describe the properties that a pre-truss $T$ and a congruence $\sim$ on it must have for the quotient near-truss $T/ \!\!\sim$ to be a  brace-type near-truss, i.e.\ a near-truss associated with a skew brace. The main theorem of this section is Theorem \ref{thm:main3} which states when a  near-truss $T/\!\!\sim$ can be associated with a  skew brace.  First we identify those normal sub-heaps of a pre-truss $T$ that faithfully correspond  to congruences.

\begin{definition}\label{def.act.paragon}
Let  $T$ be  a pre-truss. 
\begin{zlist}
\item A sub-heap $S$ of $T$ is said to be \textbf{left-closed} (resp.\ \textbf{right-closed})  if, for all $s,s'\in S$ and $t\in T$,
\begin{equation}\label{actions}
[ts',ts,s] \in S \qquad \mbox{(resp.\ $[s't,st,s] \in S$)}.
\end{equation}
\item A sub-heap $S$ that is left- and right-closed is said to be \textbf{closed}.
\item A non-empty normal sub-heap $P$ of $T$ such that every equivalence class of the sub-heap relation $\sim_P$ is a closed (normal) sub-heap of $T$ is called a \textbf{paragon}.
\end{zlist}
\end{definition}

Observe that Lemma~\ref{lem:map} implies that  if $P$ is a paragon in a pre-truss $T$, then all the equivalence classes of $\sim_P$ are mutually isomorphic paragons as well. 

\begin{remark}\label{rem.all.ex}
In the case of a non-empty sub-heap $S$ the quantifier `for all $s\in S$'  in the definition of the left or right closure property \eqref{actions} can be equivalently replaced by the existential quantifier. Indeed, assume that there exists $q\in S$ such that, for all $s'\in S$ and $t\in T$, $[ts',tq,q] \in S$. Then, for all $s\in S$,
$$
\begin{aligned}
{}[ts',ts,s] &= [[[ts',tq,q],q,tq],ts,s] = [[ts',tq,q], [ts,tq,q], s] \in S,
\end{aligned}
$$
by the associativity, Mal'cev's identities and \eqref{assoc+}, and since $S$ is a sub-heap. Similarly for the right closure property.
\end{remark}

\begin{lemma}\label{lem.par}
A non-empty normal sub-heap $P$ of a pre-truss $T$ is a paragon if and only if, for all $a,b\in T$ and $p,e\in P$,
$$
[a[p,e,b],ab,e] \in P\quad \&\quad [[p,e,b]a,ba,e] \in P.
$$
\end{lemma}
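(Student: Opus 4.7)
The plan is to translate the closure conditions on each equivalence class directly into the stated conditions, using the explicit description of equivalence classes provided by Lemma~\ref{lem:map} together with the single-representative reduction of Remark~\ref{rem.all.ex}. Recall that for any $e\in P$ and $b\in T$, Lemma~\ref{lem:map}(1)(iii) gives $\bar b=\tau_e^b(P)=\{[p,e,b]:p\in P\}$, and consequently $x\in\bar b$ if and only if $[x,b,e]\in P$ (applying the inverse translation $\tau_b^e$).

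For the \emph{only if} direction, assume $P$ is a paragon, so every class $\bar b$ is left- and right-closed. By Remark~\ref{rem.all.ex}, left-closure of $\bar b$ may be checked at a single element $q\in\bar b$; take $q=b$. Since any $s'\in\bar b$ can be written as $[p,e,b]$ for some $p\in P$, left-closure says that $[a[p,e,b],ab,b]\in\bar b$ for all $a\in T$ and $p\in P$. Using the membership test for $\bar b$ and associativity together with a Mal'cev identity, one computes
$$
\bigl[[a[p,e,b],ab,b],b,e\bigr]=\bigl[a[p,e,b],ab,[b,b,e]\bigr]=[a[p,e,b],ab,e],
$$
so the membership $[a[p,e,b],ab,b]\in\bar b$ is equivalent to $[a[p,e,b],ab,e]\in P$, which is the first required condition. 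The same calculation with the product on the other side yields the second condition from right-closure.

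For the converse, assume both displayed conditions. Fix any class $\bar b$ and any $e\in P$. To prove left-closure of $\bar b$, we again use Remark~\ref{rem.all.ex} with $q=b\in\bar b$: we must show $[as',ab,b]\in\bar b$ for every $a\in T$ and $s'\in\bar b$. Writing $s'=[p,e,b]$ and applying the same associativity/Mal'cev manipulation as above, this reduces to $[a[p,e,b],ab,e]\in P$, which holds by hypothesis. A symmetric argument using the second condition establishes right-closure of $\bar b$. Since $b\in T$ was arbitrary, every equivalence class is closed, and $P$ is a paragon.

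There is no substantial obstacle in the argument; the only non-trivial insight is to observe that the formulas $[a[p,e,b],ab,e]$ and $[[p,e,b]a,ba,e]$ are precisely the reformulations of the closure conditions once one (i) parameterises the class $\bar b$ via the fixed element $e\in P$ and (ii) invokes Remark~\ref{rem.all.ex} with the canonical choice $q=b$. Both translations are then single-line computations using the Mal'cev identities.
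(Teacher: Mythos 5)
Your proof is correct and follows essentially the same route as the paper's: both describe the class $\bar b$ as $P_e^b=\{[p,e,b]\mid p\in P\}$ via Lemma~\ref{lem:map}, reduce the closure test to the single representative $b\in\bar b$ (Remark~\ref{rem.all.ex}), and translate membership in $\bar b$ into membership in $P$ by the associativity and Mal'cev identities. The paper merely compresses the two directions into a single chain of equivalences, whereas you spell them out separately; the content is identical.
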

\begin{proof}
By Lemma~\ref{lem:map}, the equivalence class of $b\in T$  is  $\bar{b} = P_e^b = \{[p,e,b]\;|\; p\in P\}$, for all $e\in P$. Hence $\bar{b}$ is left-closed if and only if, for all $p\in P$ and $a\in T$, there exists $q\in P$ such that
$$
[a[p,e,b], ab, b]= [q,e,b], 
$$
that is, if and only if
$$
[a[p,e,b],ab,e] = q\in P,
$$
as required. 
\end{proof}

\begin{corollary}\label{cor.par}
A non-empty normal sub-heap $P$ of a near-truss $T$ is a paragon if and only if $P$ is left-closed and all equivalence classes of the induced sub-heap relation are right-closed. In particular $P$ is a paragon in a skew truss if and only if it is a closed normal sub-heap.
\end{corollary}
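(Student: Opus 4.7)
My plan is to deduce this corollary directly from Lemma~\ref{lem.par}, which already reduces the paragon condition to the pair of identities
\[
[a[p,e,b],ab,e] \in P \quad \mbox{and}\quad [[p,e,b]a,ba,e] \in P, \qquad a,b\in T,\ p,e\in P.
\]
The whole argument will amount to rewriting each of these identities in the presence of the distributive law(s).

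For the first identity, I would use left distributivity to expand $a[p,e,b]=[ap,ae,ab]$, and then the associativity and Mal'cev identities to collapse $[[ap,ae,ab],ab,e]=[ap,ae,[ab,ab,e]]=[ap,ae,e]$. Hence the first condition of Lemma~\ref{lem.par} is equivalent to $[ap,ae,e]\in P$ for all $a\in T$, $p,e\in P$, which is by definition the left-closedness of $P$.

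For the second identity, distributivity is not available, but Lemma~\ref{lem:map}(1)(iii) tells me that as $p$ runs over $P$ the element $[p,e,b]$ runs over the equivalence class $\bar b$, and that $b=[e,e,b]\in\bar b$. Using associativity and Mal'cev one has $[s'a,ba,e]\in P$ if and only if $[[s'a,ba,b],b,e]\in P$, i.e.\ $[s'a,ba,b]\in\bar b$. Thus the second condition of Lemma~\ref{lem.par}, ranging over all $b\in T$, is precisely the statement that for every equivalence class $\bar b$ and every $s'\in\bar b$, $a\in T$, one has $[s'a,ba,b]\in\bar b$; by Remark~\ref{rem.all.ex} (taking $b$ as the distinguished element of $\bar b$) this is exactly the right-closedness of every equivalence class. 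This proves the first assertion.

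For the skew-truss case I would apply the right distributive law to the second identity as well: $[p,e,b]a=[pa,ea,ba]$, and hence $[[pa,ea,ba],ba,e]=[pa,ea,e]$. The second condition therefore collapses to $[pa,ea,e]\in P$, which is right-closedness of $P$ itself. Combined with the first reduction, this gives the "in particular" statement. I do not foresee any substantive obstacle: the proof is a mechanical unpacking of Lemma~\ref{lem.par} against the distributivity hypotheses, the only mildly subtle point being the use of Lemma~\ref{lem:map} and Remark~\ref{rem.all.ex} to recognise the second condition in the near-truss case as right-closedness of all equivalence classes rather than of $P$ alone.
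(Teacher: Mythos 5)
Your proof is correct and follows essentially the same route as the paper: both arguments reduce the statement to Lemma~\ref{lem.par} and use left (resp.\ right) distributivity together with the Mal'cev identities to collapse $[a[p,e,b],ab,e]$ to $[ap,ae,e]$ (resp.\ $[[p,e,b]a,ba,e]$ to $[pa,ea,e]$ in the skew case). The paper's proof is terser, treating the identification of the second condition of Lemma~\ref{lem.par} with right-closedness of all equivalence classes as already contained in that lemma, whereas you re-derive this identification explicitly via Lemma~\ref{lem:map} and Remark~\ref{rem.all.ex}; the extra detail is harmless and correct.
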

\begin{proof}
Since in a near-truss the left distributivity law holds, the left-closure  property in Lemma~\ref{lem.par}  reduces to $[ap,ae,e]\in P$, that is, the left-closedness of $P$. In a skew truss the right-closure property  is treated symmetrically.
\end{proof}

Corollary~\ref{cor.par} shows that, in the case of skew trusses (and hence trusses) the notion of a paragon introduced in Definition~\ref{def.act.paragon}  reduces to the notion introduced in \cite[Definition~3.15]{Brz:par}.

 \begin{lemma}\label{lem.ker.par}
 Let $f:T\lra T'$  be  a morphism of pre-trusses. 
 \begin{zlist}
 \item For all $z\in \im f$, $f^{-1}(z)$ is a paragon in $T$. In particular, if $P'$ is a paragon in $\im f$, then $f^{-1}(P')$ is a paragon in $T$. 
\item If  $P$ is a paragon  in $T$ then $f(P)$ is a paragon in $\im f$. 
\end{zlist}
 \end{lemma}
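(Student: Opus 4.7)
My plan is to use the characterization of paragons from Lemma \ref{lem.par} as the working criterion, reducing each claim to verifying that the two specific heap expressions $[a[p,e,b], ab, e]$ and $[[p,e,b]a, ba, e]$ belong to the relevant subset. The key observation is that $f$ is at once a heap morphism and a semigroup morphism, so these expressions transport cleanly across $f$ in either direction, which makes the whole argument a matter of bookkeeping.

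For part (1), I first recall from the standard heap theory summarised just before Lemma \ref{lem:map} that $f^{-1}(z)$ is a non-empty normal sub-heap of $T$ whenever $z \in \im f$. To verify the paragon closure conditions I pick $a, b \in T$ and $p, e \in f^{-1}(z)$ and push $[a[p,e,b], ab, e]$ through $f$; using multiplicativity and $f(p) = f(e) = z$, the image collapses to $z$ by two successive Mal'cev identities, placing the original element in $f^{-1}(z)$. The right-sided condition is symmetric. The generalisation to $f^{-1}(P')$ for an arbitrary paragon $P' \subseteq \im f$ uses the same push-through-$f$ strategy: both the normality of $f^{-1}(P')$ and its paragon closure conditions transfer directly from the corresponding properties of $P'$, applied to the $f$-images $f(a), f(b) \in \im f$ and $f(p), f(e) \in P'$.

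For part (2), the image $f(P)$ is visibly a non-empty sub-heap of $\im f$. Normality and the paragon closure conditions will be verified by lifting: any element of $f(P)$ appearing in a test expression can be written as $f(p')$ with $p' \in P$, and any element of $\im f$ as $f(a')$ with $a' \in T$, by surjectivity of $f$ onto its image. Multiplicativity of $f$ then rewrites the whole test expression as $f$ of a corresponding expression in $T$, which lies in $P$ by the paragon hypothesis on $P$, so its $f$-image lies in $f(P)$. I do not expect any genuine obstacle; the only care needed is to keep track of the direction of transport—forward through $f$ for (1), backward via preimage selection for (2)—and to note that in (2) it is the hypothesis on $P$ in $T$ (rather than any property of $\im f$) that is ultimately invoked to certify the membership after lifting.
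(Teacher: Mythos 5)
Your proposal is correct and follows essentially the same route as the paper: both arguments reduce to the criterion of Lemma~\ref{lem.par}, establish part (1) by pushing the test expressions through $f$ (collapsing to $z$ via the Mal'cev identities, resp.\ landing in $P'$ by its paragon property), and establish part (2) by lifting elements of $\im f$ and $f(P)$ to preimages and invoking the paragon property of $P$ in $T$. No gaps; the paper's own proof of (2) is even terser than yours, being left as ``similar arguments.''
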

 \begin{proof}
 (1)  By \cite[Lemma~2.12]{Brz:par}, $f^{-1}(z)$ is a normal sub-heap which is non-empty (since $z\in \im f$). For all $a,b\in T$ and $p,e\in f^{-1}(z)$,
$$
\begin{aligned}
f([a[p,e,b],ab,e]) &= [f(a)[f(p),f(e),f(b)], f(a)f(b),f(e)]\\
& = [f(a)[z,z,f(b)], f(a)f(b),z] =z,
\end{aligned}
$$
since $f$ preserves multiplication and ternary operations, and by Mal'cev identities. Thus $[a[p,e,b],ab,e] \in f^{-1}(z)$. By the same arguments, $[[p,e,b]a,ba,e] \in f^{-1}(z)$.  In view of Lemma~\ref{lem.par} this means that $f^{-1}(z)$ is a paragon. 

Assume that $P'$ is a paragon. 
That the pre-image of a normal sub-heap is a normal sub-heap follows by the standard group-theoretic arguments.  
Since $f$ preserves multiplication and heap operation, for all $a,b\in T$ and $p,q \in f^{-1}(P')$,
$$
\begin{aligned}
f\left([a[p,q,b],ab,q]\right) &= \left[f(a)[f(p),f(q),f(b)],f(a)f(b),f(q)\right] \quad \&\\
f\left([[p,q,b]a,ba,q]\right) &= \left[[f(p),f(q),f(b)]f(a),f(b)f(a),f(q)\right].
\end{aligned}
$$
Since $P'$ is a paragon, and $f(p), f(q)\in P'$, both expressions are elements of $P'$. Therefore, $[a[p,q,b],ab,q],[[p,q,b]a,ba,q] \in f^{-1}(P')$, and hence $ f^{-1}(P')$ is a paragon. 

Statement (2) is proven by similar arguments.
 \end{proof}

\begin{theorem}\label{thm.par}
Let $P$ be a non-empty normal sub-heap of a pre-truss $T$. Then  the canonical heap map $\pi:T\to T/P$ is a homomorphism of pre-trusses if and only if $P$ is a paragon.
\end{theorem}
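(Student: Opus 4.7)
The plan is to reduce the claim to the assertion that $\sim_P$ is a congruence with respect to the semigroup operation on $T$ if and only if $P$ satisfies the two paragon conditions of Lemma~\ref{lem.par}. Since $P$ is already assumed to be a normal sub-heap, the canonical map $\pi\colon T\to T/P$ is automatically a heap epimorphism (this is recalled just before Lemma~\ref{lem:map}, by reference to \cite[Proposition~2.10]{Brz:par}). Consequently, a (necessarily unique) semigroup structure on $T/P$ turning $\pi$ into a pre-truss homomorphism exists precisely when the formula $\bar a\cdot\bar b:=\overline{ab}$ is well-defined, that is, when $\sim_P$ respects multiplication.

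For the forward direction I would assume $\pi$ is a pre-truss homomorphism, so $\sim_P$ is a multiplicative congruence, and verify the two conditions of Lemma~\ref{lem.par}. Fix $a,b\in T$ and $p,e\in P$, and set $b':=[p,e,b]$. A short calculation using \eqref{assoc+} and the Mal'cev identities yields
$$
[b',b,e]=[[p,e,b],b,e]=[p,[b,b,e],e]=[p,e,e]=p\in P,
$$
so $b'\sim_P b$. Combining this with $a\sim_P a$, the multiplicative congruence property gives $ab'\sim_P ab$ and $b'a\sim_P ba$, which unpack respectively to $[a[p,e,b],ab,e]\in P$ and $[[p,e,b]a,ba,e]\in P$. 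By Lemma~\ref{lem.par}, $P$ is a paragon.

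For the converse, assume $P$ is a paragon and take $a'\sim_P a$ and $b'\sim_P b$. Fix any $e\in P$; by Lemma~\ref{lem:map}(1)(iii) we may write $a'=[p,e,a]$ and $b'=[q,e,b]$ for some $p,q\in P$. Applying the second paragon condition of Lemma~\ref{lem.par} (after the relabelling $a\leftrightarrow b$) yields $[[p,e,a]b,ab,e]=[a'b,ab,e]\in P$, so $a'b\sim_P ab$. Applying the first paragon condition to $a'\in T$, $b\in T$ and $q,e\in P$ yields $[a'[q,e,b],a'b,e]=[a'b',a'b,e]\in P$, so $a'b'\sim_P a'b$. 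Transitivity of $\sim_P$ delivers $a'b'\sim_P ab$, as required, and $\pi$ becomes a homomorphism of pre-trusses.

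The only point requiring care — and really the only non-automatic step — is to split the two-variable congruence $a'b'\sim_P ab$ into two one-variable steps via the intermediate term $a'b$, and to remember that the left-closure clause of Lemma~\ref{lem.par} must be invoked at the general element $a'\in T$ rather than only at representatives of the fixed class $\bar a$. Once this factorisation is set up, everything else is a direct bookkeeping exercise with Lemmas~\ref{lem:map} and~\ref{lem.par} and the defining heap identities.
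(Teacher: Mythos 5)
Your proof is correct and follows essentially the same route as the paper's: reduce the statement to $\sim_P$ being a multiplicative congruence, verify the paragon conditions of Lemma~\ref{lem.par} by multiplying a class representative $[p,e,b]$ on either side, and in the converse split $a'b'\sim_P ab$ into the two one-variable steps $a'b'\sim_P a'b\sim_P ab$ chained by transitivity. The only cosmetic difference is that the paper dispatches the forward direction by citing Lemma~\ref{lem.ker.par} applied to $P=\pi^{-1}(P)$, whereas you redo that computation directly.
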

\begin{proof}
Assume that $\pi$ is a pre-truss homomorphism. Since $P=\pi^{-1}(P)$, $P$ is a paragon by Lemma~\ref{lem.ker.par}.

For the proof of the opposite implication assume that $P$ is a paragon. Then $\sim_{P}$ is a congruence on the heap $T$, so we only need to show that this relation is a congruence on the pre-truss $T$ as well. Let $a,b\in T$ be such that $a\sim_{P}b$, so that $a,b\in \pi(b)$. 
Since $P$ is a paragon, for all $t\in T$, $[ta,tb,b]\in \pi(b)$.
Hence, $[\pi(tb),\pi(ta),\pi(b)] = \pi(b)$, that is, $\pi(tb)=\pi(ta)$ or, equivalently,  $ta\sim_{P}tb$.
In the same way one can prove that $a\sim_{P}b$ implies $at\sim_{P}bt$ for all $t\in T$. Assume that $a\sim_{P}b$ and $c\sim_{P}d$. Then $ac\sim_{P}bc$, $bc\sim_{P}bd$ and $ac\sim_{P} bd$, since $\sim_{P}$ is an equivalence relation. Therefore, $\sim_{P}$ is a congruence and the canonical map $\pi: T\to T/P$ is a homomorphism of pre-trusses. This completes the proof.
\end{proof}

\begin{corollary}\label{cor:nrec}
Let $N$ be a near-ring. Then  $P\subseteq N$ is an equivalence class for a congruence on $N$ if and only if $P$ is a paragon in $\tT(N)$  
\end{corollary}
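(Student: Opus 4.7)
The plan is to reduce the claim to a direct application of Theorem~\ref{thm.par}, by arguing that congruences on the near-ring $N$ and congruences on the associated pre-truss $\tT(N)$ are literally the same equivalence relations on $N$. Concretely, I would show that a relation $\sim$ on the set $N$ respects the near-ring operations $(+,\cdot)$ if and only if it respects the pre-truss operations $([-,-,-],\cdot)$, and then exploit the standard identification of heap congruences with their equivalence classes.

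The key ingredient in the first step is the mutual definability of $+$ and $[-,-,-]$ once the zero $0\in N$ is singled out. Since $(N,+)$ is a group, any relation respecting $+$ automatically respects additive inverses, so the formula $[a,b,c]=a-b+c$ forces compatibility with the ternary operation. Conversely, $a+b=[a,0,b]$ and $-a=[0,a,0]$, so respect for $[-,-,-]$ forces respect for $+$. The multiplicative structure is literally shared between $(N,+,\cdot)$ and $\tT(N)$, so compatibility with $\cdot$ needs no reconciliation. I will also need the standard observation that for any heap congruence $\sim$, every equivalence class $P$ is a non-empty normal sub-heap and $\sim$ coincides with the induced sub-heap relation $\sim_P$: fixing any $s\in P$, the chain of equivalences $a\sim b \iff [a,b,s]\sim[b,b,s]=s \iff [a,b,s]\in P$ makes this explicit.

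With these two preparatory remarks in hand, the corollary will follow from Theorem~\ref{thm.par}. For the forward implication, a congruence on $N$ with equivalence class $P$ is, by the first step, a pre-truss congruence on $\tT(N)$, and by the second step it coincides with $\sim_P$; Theorem~\ref{thm.par} then forces $P$ to be a paragon. For the converse, if $P$ is a paragon in $\tT(N)$, Theorem~\ref{thm.par} yields that $\sim_P$ is a pre-truss congruence on $\tT(N)$, hence, by the first step, a near-ring congruence on $N$; Lemma~\ref{lem:map}(1)(iii) identifies $P$ itself as the equivalence class of any of its elements under $\sim_P$. The only even mildly delicate point is the first step, which is a routine unraveling of the definitions using the explicit formula $[a,b,c]=a-b+c$ and the additive zero; everything else is bookkeeping.
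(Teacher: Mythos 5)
Your proof is correct and follows essentially the same route as the paper: both reduce the statement to the paragon--congruence correspondence (Theorem~\ref{thm.par}, resp.\ Lemma~\ref{lem.ker.par}) after observing that, since $+$ and $[-,-,-]$ are interdefinable once the zero of $N$ is fixed, congruences on $N$ and on $\tT(N)$ are the same relations. The paper packages this transfer at the level of quotient homomorphisms rather than of congruence relations, but the content is identical.
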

\begin{proof}
Let us assume that $P$ is an equivalence class for a congruence on $N$, let $\bar{N}$ be the quotient near-truss with canonical homomorphism $\pi:N\lra \bar{N}$. Since $\pi$ is also a homomorphism of associated near-trusses, that is, $\pi:\tT(N)\lra \tT(\bar{N})$, and $P=\pi^{-1}(P)$,  $P$ is a paragon in $\tT(N)$ by Lemma~\ref{lem.ker.par}.

In the opposite direction, assume that $P$ is a paragon in $\tT(N)$. Then there exists a near-truss homomorphism $\pi :\tT(N)\to \tT(N)/P$. Observe that the triple $(\tT(N)/P,+_{\pi(e)},\cdot)$, where $e$ is the neutral element of $N$, is a near-ring, since the image of a left absorber through a near-truss homomorphism is a left absorber. Therefore $\pi$ is also a homomorphism of  the retracted near-rings and $P$ is an equivalence class of a congruence given by $\pi$ as $P=\pi^{-1}(P)$.
\end{proof}

\begin{lemma}\label{lem.brace.par}
Let $\tT(B)$ be a near-truss associated to a skew brace $B$ (with identity 1). Then $P$ is a paragon in  $\tT(B)$ if and only if, for all $p\in P$, $P_p^1$ is an ideal in $B$. 
\end{lemma}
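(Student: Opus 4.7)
\emph{Proof proposal.} The first move is to reduce the statement to one about a single distinguished equivalence class. By Lemma~\ref{lem:map}(1)(iii), applied with $e=p\in P$ and $a=1$, the set $P_p^1=\tau_p^1(P)$ is precisely $\overline{1}$, the class of $1$ under $\sim_P$; in particular it is independent of the choice of $p\in P$. Write $N:=\overline{1}$. By Lemma~\ref{lem:map}(5) we have $\sim_P=\sim_N$, and the paragon condition depends only on the common family of equivalence classes; combining this with Lemma~\ref{lem:map}(2), which transports normality across classes, one gets that $P$ is a paragon in $\tT(B)$ if and only if $N$ is. Thus it suffices to prove the equivalence: $N$ is a paragon $\Leftrightarrow$ $N$ is an ideal in $B$.

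For $(\Rightarrow)$, assume $N$ is a paragon. By Theorem~\ref{thm.par} the projection $\pi\colon\tT(B)\to\tT(B)/N$ is a morphism of pre-trusses, i.e.\ $\sim_N$ is a congruence for the multiplication. First, $N$ is a normal sub-heap of $\hH(B)$ containing $1$, so its retract at $1$ is a normal subgroup of $(B,+)$. Second, given $b\in N$ we have $b\sim_N 1$, so by the $\cdot$-congruence $ab\sim_N a$ for every $a\in B$; unfolded at $1\in N$ this reads $[ab,a,1]=ab-a\in N$, which is the third ideal axiom (using that $1$ is the additive identity). Third, the same congruence combined with $(B,\cdot)$ being a group yields $aN=\overline{a}=Na$: the inclusions $aN,Na\subseteq\overline{a}$ are direct, and the reverse follows by applying $a^{-1}$ to an element of $\overline{a}$ to land in $N$. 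This supplies the condition $aN=Na$, completing the verification.

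For $(\Leftarrow)$ the cleanest route is to pass through the quotient skew brace. Granted $N$ is an ideal, a short check using $ab-a\in N$ (with $b=n$ and with $b=n^{-1}$) shows that $N$ is closed under multiplication and multiplicative inverses, so $B/N$ inherits a skew brace structure and the projection $\pi\colon B\to B/N$ is a skew brace homomorphism. Since $\pi$ respects $+$ it respects the heap operation $[a,b,c]=a-b+c$, and together with respecting $\cdot$ this makes $\pi$ a morphism of near-trusses $\tT(B)\to\tT(B/N)$. Lemma~\ref{lem.ker.par}(1) then immediately gives that $\pi^{-1}(1)=N$ is a paragon, as required.

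The main obstacle I anticipate lies in $(\Leftarrow)$ if one insists on a direct verification via Corollary~\ref{cor.par}: the right-closedness of an equivalence class $\overline{c}$ demands $p_1b-p_2b\in N$ whenever $p_1-p_2\in N$, and in the absence of a right distributive law this seems only accessible through an awkward computation expanding both products using the skew brace identity $a(b+c)=ab-a+ac$. The quotient-skew-brace detour replaces this step with the standard and conceptually cleaner observation that ideals in skew braces yield quotient skew braces, at the price of invoking that background result.
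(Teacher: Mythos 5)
Your proof is correct and follows essentially the same route as the paper's: in one direction you verify the three ideal axioms for $N=P_p^1=\overline{1}$, and in the other you pass through the quotient skew brace $B/N$ and pull back along the induced near-truss morphism via Lemma~\ref{lem.ker.par}. The only noteworthy (but minor) difference is the check that $aP_p^1=P_p^1a$: the paper computes $a^{-1}ba=a^{-1}c-a^{-1}+b\in P_p^1$ directly from the skew brace distributive law and left-closedness, whereas you deduce $aN=\overline{a}=Na$ from the multiplicative congruence together with group inverses --- both arguments are valid.
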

\begin{proof}
Assume that $P$ is a paragon in $\tT(B)$. 
Then $1\in P_{p}^{1}$,  $(P_p^{1},+_{1})$ is a normal subgroup of $(B,+)$ as $P_{p}^{1}$ is a normal sub-heap and $+_1=+$. Since $P_{p}^{1}$ is closed, for all $a\in B$ and $b\in P_{p}^{1}$, 
 $$
 ab - a = [ab,a1,1] \in P_{p}^{1} \quad \&\quad ba-a= [ba,1a,1] \in P_{p}^{1}.
 $$
 Therefore, $ba-ab=c \in  P_{p}^{1}$, and, using the brace distributive law,
 $$
 a^{-1}ba = a^{-1}(c+ab)= a^{-1}c - a^{-1} +b \in P_{p}^{1},
 $$
 since $P_{p}^{1}$ is left-closed. This implies that $a^{-1}P_p^1a=P_p^1$, that is, $aP_p^1=P_p^1a$, and completes the proof that  $P_{p}^{1}$ is an ideal in $B$. 
 
 Conversely, if $P_{p}^{1}$ is an ideal in $B$, then $B/ P_{p}^{1}$ is a brace by \cite[Lemma~2.3]{GuaVen:ske}, and the canonical brace epimorphism $\pi: B\lra B/ P_{p}^{1}$ induces a near-truss morphism $\pi: \tT(B) \lra \tT(B/ P_{p}^{1})$. Since $P_{p}^{1} = \pi^{-1}(P_{p}^{1})$, $P_{p}^{1}$ and consequently also  $P = \left(P_{p}^{1}\right)_1^p$ are paragons by Lemma~\ref{lem.ker.par}.
\end{proof}

\begin{corollary}\label{cor.brace.par}
Let $B$ be a skew brace, then $P\subseteq B$ is an equivalence class for some congruence on $B$ if and only if $P$ is a paragon in $\tT(B)$.  
\end{corollary}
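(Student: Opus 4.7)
The plan is to deduce the corollary directly from Lemma~\ref{lem.brace.par} together with the standard bijective correspondence between congruences on a skew brace and its ideals, whereby the congruence attached to an ideal $I\subseteq B$ has as equivalence classes exactly the (left = right) additive cosets $a+I=I+a$ (cf.\ \cite[Lemma~2.3]{GuaVen:ske}). Throughout, recall that in our convention $1$ is both the multiplicative identity of $B$ and the neutral element of $(B,+)=(B,+_1)$.

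For the forward implication I would argue abstractly. Assume $P$ is a congruence class on $B$. The canonical quotient map $\pi_0\colon B\to B/\!\!\sim$ is a skew brace homomorphism, hence is also a morphism of the associated near-trusses $\pi\colon \tT(B)\to\tT(B/\!\!\sim)$. Because $P$ is a congruence class, it equals the fiber $\pi^{-1}(\pi(p))$ for any $p\in P$; Lemma~\ref{lem.ker.par}(1) then delivers at once that $P$ is a paragon in $\tT(B)$.

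For the reverse implication, assume $P$ is a paragon and fix $p\in P$. Lemma~\ref{lem.brace.par} tells us that $P_p^1$ is an ideal in $B$, so it induces a congruence on $B$ whose classes are the additive cosets of $P_p^1$. It remains to identify $P$ with one of those cosets. Since $1\in P_p^1$, Lemma~\ref{lem:map}(1)(iii) applied with $e=1\in P_p^1$ gives that $\tau_1^p(P_p^1)$ is the $\sim_{P_p^1}$-class of $p$; by Lemma~\ref{lem:map}(4), $\sim_{P_p^1}$ coincides with $\sim_P$, so this class is precisely $\bar{p}=P$. Unwinding the translation map yields
$$
P=\{[s,1,p]\mid s\in P_p^1\}=P_p^1+_1 p,
$$
which is the right additive coset of $P_p^1$ through $p$; by normality of $P_p^1$ in $(B,+_1)$ this equals the left coset $p+P_p^1$, and hence is a congruence class.

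The principal (minor) technical point is the last identification: we must bridge the translation-map description $\tau_1^p(P_p^1)=\{[s,1,p]\}$ delivered by Lemma~\ref{lem:map} with the coset $p+P_p^1$ delivered by the ideal-congruence correspondence on skew braces. Since both are cosets of the normal subgroup $P_p^1$ in $(B,+_1)$ and coincide as right cosets, normality forces them to coincide as sets, completing the identification of $P$ as a congruence class.
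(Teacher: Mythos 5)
Your proof is correct and takes essentially the same route as the paper: the forward implication via Lemma~\ref{lem.ker.par} applied to the near-truss morphism induced by the quotient map (exactly as in Corollary~\ref{cor:nrec}), and the reverse implication via Lemma~\ref{lem.brace.par} combined with the ideal--congruence correspondence for skew braces. You merely spell out the identification $P=\tau_1^p(P_p^1)=P_p^1+_1 p$ that the paper leaves implicit in its one-line appeal to Lemma~\ref{lem.brace.par}.
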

\begin{proof}
 The proof of the left to right implication is the same as in Corollary~\ref{cor:nrec}. The other implication follows by Lemma~\ref{lem.brace.par}.
 \end{proof}
 
To connect quotients of near-trusses with skew braces we need to determine which paragons do not produce absorbers in the quotients. To this end we introduce the notion of an ideal.

\begin{definition}
A normal sub-heap $I$ of a pre-truss $T$ is called  a \textbf{left} (resp.\ \textbf{right}) \textbf{ideal} if, for all $t\in T$ and $i\in I$, $ti\in I$ (resp. $it\in I$). If $I$  is both left and right ideal, then it is called an \textbf{ideal}. A left (resp.\ right) ideal is said to be \textbf{maximal} if it is not contained in any left (resp.\ right) proper ideal.
\end{definition}

Note that an ideal is a closed sub-heap, but this does not yet make it into a paragon, since the equivalence classes of the corresponding sub-heap relations need not be closed. Also note that if $f:T\to T'$ is a homomorphism of pre-trusses, then the pre-image of an ideal in $\im f$ is an ideal in $T$ and the image of an ideal in $T$ is an ideal in $\im f$.

\begin{lemma}\label{lem:par-cont}
If a left-closed normal sub-heap of a pre-truss contains a left ideal, then it is a left  ideal. 
\end{lemma}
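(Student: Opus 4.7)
My plan is to show directly that, given a left-closed normal sub-heap $S\subseteq T$ containing a left ideal $I$, for every $t\in T$ and $s\in S$ the product $ts$ lies in $S$. Since $I$ is a non-empty normal sub-heap we can fix some $i\in I$. The hypothesis $I\subseteq S$ combined with $I$ being a left ideal gives $ti\in I\subseteq S$, so we can use $ti$ as a ``witness'' element of $S$.

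Next I would invoke the left-closure of $S$ in the form of Definition~\ref{def.act.paragon}(1), taking $s'=i$, to conclude $[ti,ts,s]\in S$. Call this element $s''\in S$. The key move is then a Mal'cev-type reshuffle: using associativity together with the identity \eqref{assoc+}, one rewrites
\[
[s,[ti,ts,s],ti]=[[s,s,ts],ti,ti]=ts,
\]
where the last equality uses the Mal'cev identities $[a,a,b]=b=[b,a,a]$. Consequently $ts=[s,s'',ti]$, and since $s,s'',ti$ all lie in $S$ and $S$ is closed under the heap operation, we obtain $ts\in S$ as required. The same argument, carried out with $s'=i$ inside the definition, needs no extra hypothesis beyond left-closure of $S$ and $I\subseteq S$ with $I$ a left ideal.

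The only nontrivial step is the Mal'cev reshuffle that extracts $ts$ from $[ti,ts,s]$ using witnesses already known to be in $S$; everything else is bookkeeping. I do not anticipate any real obstacle, since the identity \eqref{assoc+} is already recorded in the preliminaries of Section~\ref{sec:2} and is precisely tailored to this kind of manipulation. Note that the argument is perfectly symmetric, so the analogous statement for right ideals and right-closed sub-heaps follows by an identical proof.
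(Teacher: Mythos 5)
Your proof is correct and follows essentially the same route as the paper: both apply the left-closure property with the ideal element $i$ occupying one of the two $S$-slots (the paper forms $[tp,ti,i]\in P$, you form $[ti,ts,s]\in S$) and then use the Mal'cev identities together with \eqref{assoc+} to recover $ts$ as a ternary combination of elements already known to lie in $S$, among them $ti\in I\subseteq S$. The reshuffle identity you use, $ts=[s,[ti,ts,s],ti]$, checks out, so the argument is sound.
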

\begin{proof}
Let $P$ be a left-closed normal sub-heap of $T$, and let $I$ be a left ideal such that $I\subseteq P$. Then, for all $p\in P$, $t\in T$ and $i\in I$,
$
tp=[[tp,ti,i],i,ti]\in P$, since $[tp,ti,i]\in P$ and $
ti,i\in I\subseteq P$.
\end{proof}

\begin{lemma}\label{lem:abs}
Let $T$ be a pre-truss and $P$ be a paragon. Then  $T/P$ has a left absorber if and only if there exist $a\in P$ and $t\in T$ such that $P_{a}^{t}$ is a left ideal. 
\end{lemma}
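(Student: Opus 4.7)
The plan is to reformulate both sides in terms of the concrete description of equivalence classes provided by Lemma~\ref{lem:map}, and then observe that the two conditions coincide almost by inspection. Since $P$ is a paragon, Theorem~\ref{thm.par} ensures that $T/P$ inherits a well-defined multiplication $\bar{s}\cdot\bar{t} = \overline{st}$, so a left absorber in $T/P$ is an equivalence class $\bar{t}$ with $st \sim_P t$ for all $s\in T$. Using Lemma~\ref{lem:map}(1)(iii), for any $a\in P$ one has $\bar{t} = P_a^t = \{[p,a,t]\mid p\in P\}$, so the left-absorber condition becomes the containment $st\in P_a^t$ for every $s\in T$.

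For the implication ($\Leftarrow$), I would fix $a\in P$ and $t\in T$ with $P_a^t$ a left ideal. Since $t=[a,a,t]\in P_a^t$, the left ideal property yields $st\in P_a^t$ for every $s\in T$, which by the reformulation above says exactly that $\bar{t}$ is a left absorber in $T/P$.

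For the converse ($\Rightarrow$), assume $\bar{t}$ is a left absorber in $T/P$ and fix any $a\in P$. By Lemma~\ref{lem:map}(2) the set $P_a^t=\bar{t}$ is already a normal sub-heap of $T$, so only the absorption property remains. Given $x\in P_a^t$ and $s\in T$, we have $\bar{x}=\bar{t}$, hence
\[
\overline{sx} \;=\; \bar{s}\cdot\bar{x} \;=\; \bar{s}\cdot\bar{t} \;=\; \bar{t},
\]
so that $sx\in\bar{t}=P_a^t$. This proves $P_a^t$ is a left ideal of $T$, completing the proof.

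There is no real obstacle here: once one has Lemma~\ref{lem:map} giving the explicit description $\bar{t}=P_a^t$ and Theorem~\ref{thm.par} ensuring that $\pi$ is a pre-truss homomorphism, both directions reduce to one-line verifications. The only point to be slightly careful about is the choice of a representative $a\in P$, but the statement conveniently quantifies existentially over such $a$, and in fact the argument above works for every $a\in P$.
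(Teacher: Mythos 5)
Your proof is correct and follows essentially the same route as the paper, which simply observes that $P_a^t=\pi(t)$ for every $a\in P$ and $t\in T$ and leaves the rest implicit; you have merely written out the details (normality of $P_a^t$ via Lemma~\ref{lem:map}(2), and the passage from $st\in P_a^t$ to $sx\in P_a^t$ for all $x\in P_a^t$ using that $\pi$ is multiplicative). Nothing further is needed.
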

\begin{proof}
The assertion follows from the fact that for every $a\in P$ and $t\in T$, $P_{a}^{t}=\pi(t)$, where $\pi$ is the canonical surjection onto the quotient $T/P$. 
\end{proof}

\begin{corollary}
If $I$ is a paragon that is a right ideal in a pre-truss $T$, then for all $e\in T\setminus I$ and all $a\in I$, $I_{a}^{e}$ is not a left ideal.
\end{corollary}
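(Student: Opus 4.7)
The plan is to argue by contradiction, transferring the hypotheses into the quotient pre-truss $T/I$ and using the observation (recorded in Section~\ref{sec:3} just after the definition of absorbers) that a pre-truss cannot simultaneously possess a left and a right absorber that are distinct. Fix $e \in T \setminus I$ and $a \in I$, and assume toward a contradiction that $I_a^e$ is a left ideal in $T$.

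Since $I$ is a paragon, Theorem~\ref{thm.par} says that the canonical surjection $\pi : T \to T/I$ is a homomorphism of pre-trusses, so $T/I$ is a pre-truss. I would then extract two absorbers in $T/I$. First, because $I$ is a right ideal in $T$, for any $t \in T$ and $i \in I$ we have $it \in I$, so $\pi(i)\pi(t) = \pi(it) = \pi(I)$; thus the class $\pi(I)$ is a right absorber in $T/I$. Second, applying Lemma~\ref{lem:abs} to the standing hypothesis that $I_a^e$ is a left ideal shows that the class $\pi(e) = I_a^e$ is a left absorber in $T/I$.

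To conclude, invoking the uniqueness of absorbers in the presence of both a left and a right one, I would deduce $\pi(e) = \pi(I)$, that is $I_a^e = I$. But by the very definition of $I_a^e = \tau_a^e(I)$ in Lemma~\ref{lem:map}, one has $e = [a,a,e] \in I_a^e$, which would force $e \in I$, contradicting the choice $e \in T \setminus I$.

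No genuine technical obstacle is anticipated: once the quotient pre-truss is built via Theorem~\ref{thm.par}, everything reduces to two easy translations (right ideal $\leadsto$ right absorber in the quotient; left-ideal equivalence class $\leadsto$ left absorber in the quotient via Lemma~\ref{lem:abs}) together with the observation that $e$ actually belongs to its own class $I_a^e$.
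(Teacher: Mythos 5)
Your proof is correct and follows essentially the same route as the paper's: both extract a right absorber in the quotient from the right-ideal hypothesis and a left absorber from the assumed left ideal $I_a^e$ via Lemma~\ref{lem:abs}, then use the uniqueness of an element that is simultaneously a left and a right absorber to force $I=I_a^e$, contradicting $e\in I_a^e\setminus I$. The only cosmetic difference is that you work throughout in $T/I$ and identify the left absorber as the class $\pi(e)=I_a^e$ directly, whereas the paper invokes the equality $T/I=T/I_a^e$ from Lemma~\ref{lem:map} to place the two absorbers in the same quotient.
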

\begin{proof}
We know from Lemma~\ref{lem:map} that $T/I = T/I_{a}^{e}$.  Assume that $I$ is a right ideal and suppose that $I_{a}^{e}$ is a left ideal. Then, by Lemma~\ref{lem:abs}, $I$ is a right absorber in $T/I$ and $I_{a}^{e}$  is a left absorber in $T/I_{a}^{e}$. Hence $I=I_{a}^{e}$. But $e\not\in I$ and $e\in I_{a}^{e}$, which yields a required contradiction and completes the proof.
\end{proof}

\begin{proposition}\label{lem:sim-rb}
Let $T$ be a unital near-truss. 
\begin{zlist}
\item  $T$ is a truss associated with a skew brace if and only if  $T$ has exactly one left ideal. 
\item  $T$ is a truss associated with a near-field if and only if $T$ has a left absorber and exactly two left ideals. 
\end{zlist}
\end{proposition}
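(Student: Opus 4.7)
The plan is to prove both biconditionals by analyzing how the multiplicative structure of $T$ constrains its left ideals.

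For the forward implication of (1), suppose $T = \tT(B)$ for a skew brace $B$, so $(T,\cdot)$ is a group. Any non-empty left ideal $I \subseteq T$ contains some $i$, and by left-multiplicative closure $1 = i^{-1}\cdot i \in I$, whence $t = t\cdot 1 \in I$ for every $t \in T$; thus $I = T$. The forward implication of (2) is analogous: in a near-field $N$ the additive identity $0$ is a left absorber of $\tT(N)$ (since $n(0+0)=n\cdot 0+n\cdot 0$ forces $n\cdot 0=0$), so $\{0\}$ is a left ideal, while any other left ideal $I$ satisfies $0=0\cdot i\in I$ for $i\in I$ and, if it contains a non-zero $i$, yields $1=i^{-1}\cdot i\in I$, forcing $I=T$.

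For the backward implication of (1), assume $T$ has exactly one left ideal. Since $T$ itself is always a left ideal, the unique left ideal is $T$. If $a$ were a left absorber of $T$, then the singleton $\{a\}$ would be a second left ideal unless $T = \{a\}$, so in the non-trivial case $T$ has no left absorber. To show $(T,\cdot)$ is a group it suffices by standard monoid theory to find a left inverse for each $t\in T$. Fix $t$ and let $L_t$ be the smallest normal sub-heap of $T$ containing $Tt = \{st: s\in T\}$. The sub-heap closure $\langle Tt\rangle$ is closed under left multiplication by virtue of $r[x,y,z] = [rx,ry,rz]$, and any conjugate $[[a,e,s],a,e]$ of $s\in \langle Tt\rangle$ formed in the retract at $e$ is sent by left multiplication by $r$ to $[[ra,re,rs],ra,re]$, which is a conjugate of $rs\in\langle Tt\rangle$ in the retract at $re$, hence lies in $L_t$. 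Consequently $L_t$ is a left ideal, so by uniqueness $L_t = T$ and in particular $1\in L_t$. Unravelling how $1$ is assembled from $Tt$ through the iterated closure operations produces an $s\in T$ with $s\cdot t = 1$.

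The backward implication of (2) proceeds analogously: the hypothesis of a left absorber $a$ and exactly two left ideals identifies them as $\{a\}$ and $T$, and for any $t\in T\setminus\{a\}$ the left ideal $L_t$ constructed above contains $t\neq a$ and must therefore equal $T$; the reasoning of the previous paragraph then produces a multiplicative inverse for $t$, making $(T\setminus\{a\},\cdot)$ a group and $T$ the near-truss of the near-field $(T,+_a,\cdot)$.

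The main obstacle is the final step of the backward direction of (1), namely the passage from the abstract membership $1\in L_t$ to the existence of a specific $s\in T$ with $s\cdot t = 1$. Since $L_t$ is built from $Tt$ by iterating sub-heap formation, conjugation, and left multiplication, one must show that the identity, once inside $L_t$, can be untangled into a genuine left multiple of $t$. This requires careful use of the translation formalism of Lemma~\ref{lem:map} together with left distributivity, particularly when the underlying heap is not abelian.
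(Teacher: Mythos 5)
The backward direction of (1) --- the heart of the proposition --- is not actually proved. You correctly sense that $Tt=\{st\mid s\in T\}$ is not obviously a sub-heap of a near-truss (only the left distributive law is available, so $[at,bt,ct]$ cannot be rewritten as $[a,b,c]t$), and you therefore replace $Tt$ by its normal sub-heap closure $L_t$, argue that $L_t$ is a left ideal, and conclude $L_t=T$, hence $1\in L_t$. But membership $1\in L_t$ only says that $1$ is some iterated heap/conjugation word in elements of the form $at$; without right distributivity there is no identity collapsing such a word to a single product $st$, so ``unravelling'' does not produce $s$ with $st=1$. You flag this yourself as ``the main obstacle,'' which is an admission that the decisive step is missing: as written, the argument shows only that the left ideal generated by $t$ is all of $T$, not that $t$ has a left inverse. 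The paper's proof works with $Tx$ itself as the left ideal, so that $Tx=T$ literally yields $y$ with $yx=1$, and then promotes the left inverse to a two-sided one by the standard monoid computation $x'=(x'y)x=x$; the content you would need to supply is precisely that $1\in Tx$ (equivalently, that $Tx$ is closed under the heap operation, or some substitute argument producing a genuine left multiple of $x$ equal to $1$), and your detour through the normal closure discards exactly that information. The backward direction of (2) inherits the same gap, since it invokes the same construction to invert every $t\neq a$.

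Two smaller points. In the forward direction of (2) you write $0=0\cdot i$; in a near-ring only the left distributive law $n(m+m')=nm+nm'$ holds, so $0\cdot i$ need not vanish (only $i\cdot 0=0$ is guaranteed). The line is inessential --- the dichotomy ``either $I=\{0\}$ or $I$ contains a nonzero, hence invertible, element and so contains $1$ and equals $T$'' already gives exactly two left ideals --- but it should be deleted. The forward direction of (1) agrees with the paper's argument and is fine.
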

\begin{proof}

(1) Assume that $T$ has exactly one left ideal. 
For all $x \in T$ the left ideal $Tx:=\{tx\;|\; t\in T\}$ has to be the whole of $T$ (in particular if $T$ has at least two elements, then it has no left absorbers). Therefore, there exists $y\in T$ such that $yx=1$ and $y$ is a left inverse to $x$. As $x$ is an arbitrary element there exists $x'$ such that $x'y=1$. Thus $(x'y)x=x$ and by associativity $x'=x$. The conclusion is that $y$ is the two-sided inverse of $x$ and the monoid $(T, \cdot)$ is a group. Therefore, the near-truss $T$ is a brace-type near-truss (see \cite[Corollary 3.10]{Brz:par}).

Conversely, suppose that $T=\tT(B)$ for a skew brace $B$ and that there exists a left ideal $I\subsetneq \tT(B).$ Observe that if $x\in I$, then $x^{-1}x=1\in I$, therefore $I=T$. This contradicts the assumption that $I\neq T$. Thus $T$ has exactly one left ideal. 

(2) Let us assume that $T$ has a left absorber and exactly two left ideals. Then there exists a near-ring $R$ such that $T=\tT(R)$, to be precise $R$ is the retract $(\tT(R),+_e)$, where $e$ is the left absorber. Seeking contradiction, suppose that $R$ is not a near-field. Then there exists a left ideal $\{e\}\neq I\subsetneq R$; but $I$ is also a left ideal of $\tT(R)$, which contradicts with the assumption that $T$ has only two left ideals. 
Therefore, $R$ is a left near-field.

Assume that $T=\tT(F)$, where $F$ is a left near-field, then $0$ (the neutral element for the addition in $F$) is a left absorber in $T$.  Suppose by contradiction that $\tT(F)$ has a left ideal  $\{0\}\neq I\subsetneq \tT(F)$. Consider, for any $a\in I$ the ideal $I_{a}^{0}:=\{[b,a,0]\;|\; b\in I\}$. The ideal $I_{a}^{0}$ is neither equal to $\{0\}$ nor to $T$, since the map $[-,a,0]$ is a bijection. Furthermore, $I_{a}^{0}$ is an ideal in $F$, and hence $F$ is not a near-field. This contradicts with the assumption that $F$ is a near-field. 
\end{proof}

\begin{lemma}\label{lem:max-id-sim}
Let $T$ be a near-truss. If $I$ is a paragon in $T$ that is a left maximal  ideal, then $T/I$ has no  ideals different from a singleton set and $T/I$.
\end{lemma}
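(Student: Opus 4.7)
The plan is to reduce the statement to the maximality of $I$ as a left ideal in $T$, by pulling back ideals along the canonical projection $\pi\colon T\to T/I$. Given any ideal $J$ of $T/I$, I would set $K=\pi^{-1}(J)$. Since $I$ is a paragon, Theorem~\ref{thm.par} guarantees that $\pi$ is a homomorphism of pre-trusses, and the excerpt already records that pre-images of ideals under pre-truss homomorphisms are ideals. In particular, $K$ is a left ideal of $T$.

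The key observation to establish is that $K\supseteq I$, and this rests on the fact that the class of $I$, viewed as a single element of $T/I$, is a left absorber of $T/I$. Indeed, because $I$ is a left ideal of $T$, for any $t\in T$ and $i\in I$ one has $ti\in I$, so in the quotient $\bar{t}\cdot I=I$ for every $\bar t\in T/I$. Now pick any $\bar{j}\in J$ (possible since $J$, being a normal sub-heap, is non-empty). As $J$ is in particular a right ideal of $T/I$, the product $\bar{j}\cdot I$ lies in $J$, and by the absorber property $\bar{j}\cdot I=I$. Hence $I\in J$, which is exactly the statement $K=\pi^{-1}(J)\supseteq I$.

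At this point the maximality hypothesis takes over: $K$ is a left ideal of $T$ containing $I$, so either $K=I$ or $K=T$. In the first case $J=\pi(K)=\{I\}$ is a singleton; in the second case $J=T/I$. Either alternative falls within the conclusion of the lemma.

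The only non-routine ingredient (and hence the main conceptual step) is the interplay between the right-ideal property of $J$ and the fact that $I$ is a \emph{left} absorber of $T/I$; it is exactly this side-asymmetry that allows a one-sided maximality statement in $T$ to force a two-sided triviality statement in $T/I$. A purely one-sided analogue for left ideals of $T/I$ would not go through by this argument, since there is no reason for a left ideal of $T/I$ to contain the distinguished class $I$.
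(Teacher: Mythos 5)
Your argument is correct and does prove the lemma as literally stated, but it takes a genuinely different route from the paper's. You exploit the two-sidedness of the ideal $J\subseteq T/I$: since the class $I$ is a left absorber in $T/I$ and $J$ is in particular a right ideal, $\bar{\jmath}\cdot I=I$ lies in $J$, so $I\in J$, hence $\pi^{-1}(J)$ is a left ideal of $T$ containing $I$, and maximality forces $\pi^{-1}(J)=I$ or $T$. The paper instead starts from an arbitrary \emph{left} ideal $\mathfrak{J}$ of $T/I$, picks $J\in\mathfrak{J}$, and replaces $\mathfrak{J}$ by its translate $\mathfrak{J}_J^I=\tau_J^I(\mathfrak{J})$, which is again a left ideal of $T/I$ (by the left distributive law together with the absorber property of $I$) and contains $I$ by construction; pulling that translate back along $\pi$ and invoking maximality gives $\mathfrak{J}_J^I=\{I\}$ or $T/I$, whence $\mathfrak{J}=\{J\}$ or $T/I$. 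The translation trick buys a strictly stronger conclusion, namely that $T/I$ has no proper \emph{left} ideals, and this stronger form is what the paper actually relies on in the discussion immediately following the lemma (``dividing by a paragon which is a left maximal ideal yields a near-truss without proper left ideals''). Your closing remark correctly diagnoses why your route cannot reach that version: it spends the right-ideal half of the hypothesis on locating the class $I$ inside $J$. So your proof is a valid and slightly shorter proof of the statement as written, but to recover the one-sided strengthening you would need the paper's translation argument (or an equivalent device).
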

\begin{proof}
Suppose that $\mathfrak{J}\neq T/I$ is a left ideal in $T/I$ that is not a singleton set. Since $I$ is a left absorber in $T/I$, 
for any element $J\in \mathfrak{J}$, $\mathfrak{J}_{J}^{I}$ is a left ideal in $T/I$ by the left distributive law. Hence, $\pi^{-1}(\mathfrak{J}_{J}^{I})$ is a left ideal in $T$, where $\pi:T\lra T/I$ is the canonical surjection. Moreover, $I\subset \pi^{-1}(\mathfrak{J}_{J}^{I})$, since $I\in \mathfrak{J}_{J}^{I}$. Therefore, since $I$ is left maximal, either $I= \pi^{-1}(\mathfrak{J}_{J}^{I})$, and hence 
$\mathfrak{J}_{J}^{I} =\{I\}$, which implies that $\mathfrak{J}=\{J\}$, or $\pi^{-1}(\mathfrak{J}_{J}^{I}) = T$, which implies in turn that $\mathfrak{J} = T/I$.  Thus both cases lead to a contradiction.
\end{proof}

Although dividing by a paragon which is a left maximal ideal  yields a near-truss without proper left ideals, this near-truss always has an absorber. Therefore it is never a brace-type near-truss. The most straightforward idea to generalise maximality to paragons leads us to the following definition:

\begin{definition}
Let $T$ be a pre-truss. A left-closed (resp.\ right-closed) normal  sub-heap $P\subsetneq T$ is said to be \textbf{maximal} if it is not contained in any left-closed (resp.\ right-closed) sub-heap other than $T$. A paragon $P$ is said to be \textbf{left maximal} (resp.\ \textbf{right-maximal}, \textbf{maximal}) if it is a maximal left-closed (resp.\ right-closed, left- and right-closed) sub-heap. 
\end{definition}

\begin{lemma}
Let $T$ be a near-truss or a skew-truss and $P$ be a left-closed normal sub-heap. Then $P$ is maximal if and only if, for all $a\in P$ and $t\in T$, $P_{a}^{t}$ is a maximal left-closed normal sub-heap.
\end{lemma}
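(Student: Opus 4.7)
The plan is to exploit the translation maps $\tau_a^t\colon z\mapsto [z,a,t]$ from Lemma~\ref{lem:map}, which are heap isomorphisms with inverse $\tau_t^a$ and under which $P_a^t=\tau_a^t(P)$. The strategy is to show that these translations also preserve the property of being a left-closed sub-heap; this yields an inclusion-preserving bijection between left-closed sub-heaps containing $P$ and left-closed sub-heaps containing $P_a^t$, from which the equivalence of the two maximality statements is immediate.

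The central technical step is therefore the following claim: for any left-closed sub-heap $Q\subseteq T$ and any $a,t\in T$, the image $\tau_a^t(Q)$ is again a left-closed sub-heap. Given $q_1,q_2\in Q$ and $s\in T$, one has to check that
$$
[s[q_1,a,t],\,s[q_2,a,t],\,[q_2,a,t]]\in\tau_a^t(Q).
$$
Using left-distributivity to rewrite $s[q_i,a,t]=[sq_i,sa,st]$ and then simplifying by associativity, \eqref{assoc+} and the Mal'cev identities, this expression collapses to $[[sq_1,sq_2,q_2],a,t]$, which lies in $\tau_a^t(Q)$ because $[sq_1,sq_2,q_2]\in Q$ by left-closedness of $Q$. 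I expect this to be essentially the only real calculation, and the main (mild) obstacle is simply checking that the heap identities collapse the expression as claimed. Observe that only left-distributivity is used, so the argument covers the near-truss and skew-truss cases simultaneously.

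Granted this preservation property, both implications are straightforward. For the ``if'' direction, pick any $a\in P$ (possible since $P$ is non-empty) and note that $P_a^a=P$; hence if every $P_a^t$ is maximal, so in particular is $P$. For the ``only if'' direction, suppose $P$ is maximal and fix $a\in P$, $t\in T$. If $R$ is a left-closed sub-heap with $P_a^t\subseteq R\subsetneq T$, then by the key claim $\tau_t^a(R)$ is left-closed and satisfies $P\subseteq\tau_t^a(R)\subsetneq T$, proper containment being preserved because $\tau_t^a$ is a bijection. The maximality of $P$ forces $\tau_t^a(R)=P$, whence $R=\tau_a^t(P)=P_a^t$, proving $P_a^t$ is maximal. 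Normality of $P_a^t$, needed for ``maximal left-closed normal sub-heap'' to be well-defined in the conclusion, is automatic from Lemma~\ref{lem:map}(2).
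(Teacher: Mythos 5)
Your argument is correct and follows essentially the same route as the paper: both proofs transfer maximality along the translation isomorphisms $\tau_a^t$, the only difference being that you make explicit (and verify by the heap identities together with left-distributivity) the fact that translations preserve left-closedness, which the paper merely asserts.
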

\begin{proof}
Note that by the normality of $P$ and the left distributive law, all the $P_{a}^{t}$  are left-closed normal sub-heaps. Seeking contradiction assume that $P$ is maximal and there exists $a\in P$ and $t\in T$ such that $P_{a}^{t}$ is not maximal. Then there exists a left-closed normal sub-heap $Q$ such that $P_{a}^{t}\subsetneq Q\subsetneq T$. Since $\tau_a^t$ is an isomorphism with the inverse $\tau_t^a$, this implies that $P\subsetneq Q_{t}^{a}\subsetneq T$. Hence $P$ is not maximal, contrary to the assumption. 

The opposite implication is also easily deduced from the fact that $P=(P_{a}^{t})_{t}^{a}.$
\end{proof}

\begin{remark}
In the case of rings the notion of maximal ideals and maximal paragons coincide as every paragon $P$ in the ring can be associated with an ideal $P_{a}^{0}$ for any $a\in P$ and an absorber $0$.
\end{remark}

\begin{lemma}\label{lem:par-id}
Let $T$ be a near-truss or a skew-truss and $P \subseteq T$ a left maximal paragon, then $T/P$ has no proper (i.e.\ different from singletons and the whole of $T/P$) left ideals.
\end{lemma}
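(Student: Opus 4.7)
The plan is to argue by contradiction. Suppose $\mathfrak{J}\subseteq T/P$ is a left ideal that is neither a singleton nor the whole of $T/P$, and let $\pi\colon T\lra T/P$ be the canonical pre-truss epimorphism. The aim is to produce a left-closed normal sub-heap of $T$ strictly between $P$ and $T$, contradicting the left maximality of $P$.

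The naive candidate $\pi^{-1}(\mathfrak{J})$ is itself a left ideal of $T$ (hence left-closed and normal), but it need not contain $P$. To remedy this, fix $J\in\mathfrak{J}$ and translate: set $\mathfrak{J}':=\mathfrak{J}_J^P=\tau_J^P(\mathfrak{J})\subseteq T/P$. By Lemma~\ref{lem:map} this is a normal sub-heap of $T/P$ of the same cardinality as $\mathfrak{J}$, and it contains $P=[J,J,P]$.

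The key step is to verify that $\mathfrak{J}'$ is still left-closed, even though translation may destroy the ideal property. Given $K_1',K_2'\in\mathfrak{J}'$ and $M\in T/P$, write $K_i'=[K_i,J,P]$ with $K_i\in\mathfrak{J}$ and expand $MK_i'=[MK_i,MJ,MP]$ via left distributivity. A short manipulation using \eqref{assoc+} (twice) together with the Mal'cev identities collapses the internal block $[[MK_1,MJ,MP],MP,MJ]$ to $MK_1$, reducing $[MK_2',MK_1',K_1']$ to $[[MK_2,MK_1,K_1],J,P]$. Since $MK_1,MK_2\in\mathfrak{J}$ by the left ideal property and $\mathfrak{J}$ is a sub-heap, $[MK_2,MK_1,K_1]\in\mathfrak{J}$, so the whole expression lies in $\mathfrak{J}'$.

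With $\mathfrak{J}'$ left-closed, $\pi^{-1}(\mathfrak{J}')$ is a left-closed normal sub-heap of $T$ containing $P$. The left maximality of $P$ then forces $\pi^{-1}(\mathfrak{J}')=P$ or $\pi^{-1}(\mathfrak{J}')=T$; accordingly $\mathfrak{J}'=\{P\}$ or $\mathfrak{J}'=T/P$, and the same dichotomy transports back to $\mathfrak{J}$ via the bijection $\tau_P^J$, contradicting the standing assumption. The only real obstacle is the heap-theoretic reduction establishing the left-closedness of $\mathfrak{J}'$; the rest is routine bookkeeping of normality and pre-images. The argument uses only left distributivity, so it applies uniformly to near-trusses and to skew trusses.
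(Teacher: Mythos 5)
Your proof is correct and follows essentially the same route as the paper, whose own two-line argument ("maximality of $P$ rules out proper left-closed normal sub-heaps of $T/P$, and a left ideal is such a sub-heap") leaves the details implicit. You correctly supply the one nontrivial ingredient that argument relies on, namely that a proper left ideal of $T/P$ must first be translated by $\tau_J^P$ to a left-closed normal sub-heap containing the class $P$ before being pulled back along $\pi$ --- and your verification that left-closedness survives translation (via left distributivity) matches the observation the paper makes in the unnamed lemma preceding this one.
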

\begin{proof}
By the definition of maximality of $P$, $T/P$ has no proper left paragons. Therefore it has no proper left ideals as a left ideal is a left paragon.
\end{proof}

Observe that by dividing a near-truss without left absorbers by a paragon which is left-maximal one obtains a near-truss associated with a skew brace. If the quotient is a skew brace, then it is a simple brace, that is, it has no ideals in the sense of sub-braces different from the skew brace itself and singleton subsets of it. Maximal paragons do not characterise all the quotients which are  brace-type near-trusses, since there exist skew braces that are non-simple.

\begin{theorem}\label{thm:main3}
Let $T$ be a unital near-truss and $P$ be a paragon,  and let $\pi_P:T\lra T/P$ be the canonical epimorphism. Then $T/P$ is a brace-type near-truss if and only if, for all left ideals $I\subsetneq T$ and $\|a\in T/P$, $\pi_P^{-1}(\|a)\not\subseteq I$.
\end{theorem}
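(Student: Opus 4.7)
The plan is to reduce the equivalence to Proposition~\ref{lem:sim-rb}(1), which characterises brace-type unital near-trusses as exactly those with no proper left ideal, and to shuttle information between left ideals of $T$ and left ideals of $T/P$ via $\pi_P$. Throughout I would use the easily verified fact that a pre-truss epimorphism sends left ideals to left ideals in both directions (the two-sided version is noted after the definition of ideal). The only-if implication I would handle by contraposition: if $T/P$ is not brace-type, then Proposition~\ref{lem:sim-rb}(1) supplies a proper left ideal $\mathfrak{J} \subsetneq T/P$, and $I := \pi_P^{-1}(\mathfrak{J})$ is then a proper, saturated left ideal of $T$; picking any $\bar a \in \mathfrak{J}$ yields $\pi_P^{-1}(\bar a) \subseteq I$, contradicting the hypothesis.

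For the if direction I would assume $T/P$ is brace-type, suppose for contradiction that $\pi_P^{-1}(\bar a) \subseteq I$ for some proper left ideal $I \subsetneq T$ and some class $\bar a$, and aim to escalate ``$\bar a \subseteq I$'' up to ``$1 \in I$''; the unital structure then forces $T = T \cdot 1 \subseteq I$, contradicting $I \subsetneq T$. Stage~1: the image $\pi_P(I)$ is a non-empty left ideal of $T/P$, which has no proper left ideals, so $\pi_P(I) = T/P$; hence every $\sim_P$-class meets $I$, in particular the paragon $P$ does, and one may fix $e \in P \cap I$. Stage~2: using Lemma~\ref{lem:map}(1)(iii) write $\bar a = \{[p,e,a] : p \in P\}$ for a fixed $a \in \bar a$; the heap identity $p = [[p,e,a],a,e]$ (a consequence of \eqref{assoc+} and the Mal'cev axioms) exhibits each $p$ as a ternary combination of elements of $I$, so $P \subseteq I$ by the sub-heap property. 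Stage~3: since $T/P$ is a group under multiplication, pick an inverse $\bar b = \bar a^{-1}$ and any $b \in \bar b$, so that $ba \in \bar 1$ and $ba \in I$ since $a \in I$ and $I$ is a left ideal; a further application of Lemma~\ref{lem:map}(1)(iii) gives $\bar 1 = \{[p,e,ba] : p \in P\}$, and since $p, e \in P \subseteq I$ and $ba \in I$, the sub-heap property yields $\bar 1 \subseteq I$, whence $1 \in I$.

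The crux of the argument is Stage~1: the observation that brace-typeness of the quotient forces the paragon $P$ itself to intersect every proper left ideal of $T$. Without the intersection point $e \in P \cap I$ the Mal'cev trick in Stage~2 cannot begin, and the bootstrap from $\bar a \subseteq I$ first to $P \subseteq I$ and then to $\bar 1 \subseteq I$ breaks down. Once $e$ is in hand the rest is essentially bookkeeping: the class parametrisation of Lemma~\ref{lem:map}(1)(iii) and the group inverse supplied by brace-typeness together transport the containment from $\bar a$ through $P$ to $\bar 1$, whereupon the unital identity closes out the contradiction.
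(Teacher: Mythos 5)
Your proof is correct and follows essentially the same route as the paper's: both directions rest on the characterisation of brace-type unital near-trusses as those with no proper left ideal (Proposition~\ref{lem:sim-rb}(1)) together with the transport of left ideals along $\pi_P$, and your backward (contrapositive) direction is identical to the paper's. The only divergence is in the endgame of the forward direction and is cosmetic: the paper translates an arbitrary $c\in T$ with $\pi_P(c)\in\pi_P(I)$ into the class $\bar a$ and back to conclude $c\in I$ directly, whereas you bootstrap $\bar a\subseteq I$ through $P\subseteq I$ and the group inverse to reach $1\in I$ and then invoke unitality --- both are the same heap-translation trick powered by $\pi_P(I)=T/P$.
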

\begin{proof}
Let us assume that $T/P$ is a  brace-type near-truss. Observe that should $\pi_{P}^{-1}(\|a)\subseteq I$ for a left ideal $I$, then $\pi_{P}(I)$ would be a left ideal in $T/P$. Thus, $\pi_{P}(I)=T/P$, since  $T/P$ is a brace-type near-truss. On the other hand, if $c\in T\setminus I$ then $\pi_P(c)\not\in \pi_P(I)$. Indeed, should $\pi_P(c)\in \pi_P(I)$, then there would exist $i\in I$ and $p\in P$ such that $[c,i,p]\in P$. Thus, for all $a\in \pi_P^{-1}(\|a)$,  $[c,i,a]=[[c,i,p],p,a]\in \pi_P^{-1}(\|a)\subset I$ and $c\in I$. Therefore, $I=T$.

Now, assume that, for all left ideals $I\subsetneq T$ and $\|a\in T/P$, $\pi_{P}^{-1}(\|a)\not\subseteq I$ and $T/P$ is not a  brace-type near-truss. Then there exists a left ideal $\mathfrak{J}\subsetneq T/P$. The pre-image $\pi_{P}^{-1}(\mathfrak{J}) \subsetneq T$ is a left ideal in $T$ and, obviously, for any $\|j\in\mathfrak{J}$, $\pi_{P}^{-1}(\|j)\subseteq \pi_{P}^{-1}(\mathfrak{J})$. This contradicts the assumption that, for all $\|a\in T/P$, $\pi_{P}^{-1}(\|a)\not\subseteq \mathfrak{J}$, so $T/P$ is a brace-type near-truss. The proof is completed.
\end{proof}

\begin{example}\label{ex:3.1}
Let $B$ be a skew brace and $R$ a ring. One can consider the product near-truss $\tT(B)\times \tT(R)$ with operation given by $(b,r)(b',r')=(bb',rr')$, for all $(b,r),(b',r')\in \tT(B)\times \tT(R)$. It is easy to check that, for any ideal $I$ in $R$, $\tT(B)\times I$ is an ideal in $\tT(B)\times \tT(R)$ and that for any paragon $P$ in $\tT(B),$ $P\times I$ is a paragon in $\tT(B)\times \tT(R)$. Every paragon of the form $P\times \tT(R)$ fulfills conditions in Theorem \ref{thm:main3} and one easily finds that $(\tT(B)\times \tT(R))/(P\times \tT(R))\cong \tT(B)/P$
\end{example}

\begin{example}\label{ex:3.2}
Let $T = 2\ZZ + 1$. The set $P = \{2^nm + 1 \text{ \ | \  } m \in T\} \subset T$  is a paragon and the quotient $T/P$ is a brace-type truss isomorphic to  $U(\ZZ/2^{n+1}\ZZ)$, the sub-truss of all units in the quotient ring $\ZZ/2^{n+1}\ZZ$. To prove that this isomorphism holds it is first of all helpful to notice that $|T/P| = 2^n = |U(\ZZ/2^{n+1}\ZZ)| $. Indeed, there are as many classes in the quotient as the odd numbers between $2^nm+1$ and $2^n(m+2) + 1$ (it is important to notice that, if $m$ is odd, then $m+1$ is even), so exactly $2^n$. Then the isomorphism is given by sending $\overline{2m+1} \in T/P$ to $2m+1 \textup{ mod } 2^{n+1}$: this is evidently injective, so also surjective since the two sets have the same size, and it is easily proven to be a homomorphism.
\end{example}

\section{Domains and completely prime paragons}\label{sec:4}
The aim of this section is to introduce the notion of a completely prime paragon. This, in analogy to the case of rings, should lead to a quotient pre-truss that is a domain, i.e.\ a pre-truss in which cancellation properties hold. After describing such paragons, the next step is to consider the Ore localisation for pre-trusses, which is the subject of the following section. By inverting all elements of a domain we should obtain a pre-truss without proper left ideals and with no absorbers, so if the distributive law holds this will be a near-truss associated with a skew brace. Let us start with the definition of a domain. When working with rings, there is always an absorber which in many cases allows for simplification of some conditions. Not all pre-trusses have an absorber (in fact, having brace applications in mind, we are particularly interested in those that do not have absorbers), so many of the well-known definitions need to be in some sense generalised or stated without involving any absorber. We begin with the definition of a regular element:
\begin{definition}\label{def:reg.el}
Let $T$ be a pre-truss. An element $a\in T^\Abs$ is said to be \textbf{left regular} (resp.\ \textbf{right regular}) if, for all $b\not=c$,
\begin{equation}\label{ue:1}
 ab\not= ac\ \qquad \mbox{(resp.\ $ba\not=ca$)}.
\end{equation}
If $a$ is both left and right regular element then it is said to be \textbf{regular}.
\end{definition}
Observe that conditions $\eqref{ue:1}$ can be written in a way that makes them reminiscent of the closedness conditions \eqref{actions} used in the definition of a paragon.
The statement that $ac\not=ab$ is equivalent to saying that $[ac,ab,b]\not=b$. Similarly, $ba\not=ca$ is equivalent to say that $[ca,ba,b]\not= b$. This indicates that  these conditions  are closely related to the definition of paragon.
\begin{lemma}\label{lem:unitoexist}
Let $T$ be a near-truss. Then $a\in T$ is a left regular element if and only if there exists an element $c$ such that, for all $b\in T\setminus\{c\},$
\begin{equation}\label{ue:2}
ab\not= ac.
\end{equation}
\end{lemma}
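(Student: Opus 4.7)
The forward direction requires essentially no work: if $a$ is left regular in the sense of Definition~\ref{def:reg.el}, then \eqref{ue:2} holds for \emph{every} choice of $c\in T$, so in particular one such $c$ exists. The whole content of the lemma is the converse.

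For the converse, my plan is to show the contrapositive: assuming $a$ is not left regular, I exhibit, for \emph{every} candidate $c\in T$, some $b\neq c$ with $ab=ac$. So fix $c\in T$, and suppose there exist $d\neq e$ in $T$ with $ad=ae$. The natural way to transport this failure of cancellation from the pair $(d,e)$ to the pair $(b,c)$ is via the translation map $\tau_e^c$ of \eqref{trans}, which is a heap isomorphism and hence in particular a bijection. I therefore set
\[
b := \tau_e^c(d) = [d,e,c].
\]

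The argument then proceeds in two short steps. First, since $\tau_e^c$ is a bijection and $\tau_e^c(e)=[e,e,c]=c$ by the Mal'cev identity, the assumption $d\neq e$ forces $b\neq c$. Second, using the left distributive law (available because $T$ is a near-truss) together with the hypothesis $ad=ae$ and another Mal'cev identity, one computes
\[
ab \;=\; a[d,e,c] \;=\; [ad,ae,ac] \;=\; [ad,ad,ac] \;=\; ac.
\]
This contradicts \eqref{ue:2} for the chosen $c$, completing the contrapositive.

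There is no real obstacle here; the only subtlety is recognising that the translation maps from Lemma~\ref{lem:map} are the right device for converting "there is some bad pair" into "there is a bad pair involving the distinguished element $c$", and that left distributivity is exactly what is needed to push this through the multiplication by $a$.
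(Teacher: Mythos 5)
Your proof is correct and is essentially the paper's argument in contrapositive form: both rely on the bijectivity of the translation map $[-,e,c]$ together with the left distributive law and a Mal'cev identity to transport a failure (or validity) of cancellation from one reference element to any other. No substantive difference in method.
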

\begin{proof}
If $a$ is left regular then, for all $c\in T$ and all $b\in T\setminus\{c\}$, the inequality \eqref{ue:2} holds, which implies the existence of $c$. 

Assume that there exists $c\in T$ such that, for all $b\not=c$, $ab\not=ac$. Thus $[ab,ac,ac']=a[b,c,c']\not=ac'$, for all $c'\in T$. Note that, for all $c,c'\in T$, the map 
$$
[-,c,c']:T\setminus\{c\}\lra T\setminus\{c'\}, \qquad b\lto [b,c,c'],
$$ 
is a bijection. Therefore, for all $t\in T\setminus\{c'\}$, $at\not=ac'$. By the arbitrariness of $c'$, $a$ is a left regular element. This completes the proof.
\end{proof}

\begin{lemma}\label{lem.reg.ring}
Let $R$ be a ring. Then $a\in R$ is a regular element if and only if $a$ is a regular element in $\tT(R).$
\end{lemma}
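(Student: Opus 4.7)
The plan is to reduce the pre-truss notion of regularity to its ring-theoretic counterpart by exploiting the fact that $\tT(R)$ has $0$ as its (unique) absorber, so $\tT(R)^{\Abs}=R\setminus\{0\}$, which matches the usual ring convention that regular elements are nonzero and non-zero-divisors.

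First I would observe that $\tT(R)$ is a truss (in particular a near-truss), so Lemma~\ref{lem:unitoexist} is available: $a\in\tT(R)$ is left regular in the pre-truss sense exactly when there exists \emph{some} $c\in\tT(R)$ with $ab\neq ac$ for every $b\neq c$. Choosing the convenient witness $c=0$ reduces the condition to $ab\neq 0$ for every $b\neq 0$, which is precisely the statement that $a$ is not a left zero-divisor in $R$. This yields the equivalence of left regularity in both senses in one line, and the right-sided case is handled by the symmetric argument (using the right-sided analogue of Lemma~\ref{lem:unitoexist}, or by arguing directly through the identity $ba=ca\Leftrightarrow (b-c)a=0$).

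Alternatively, and more directly, one can bypass Lemma~\ref{lem:unitoexist} and work from Definition~\ref{def:reg.el} by using the ring identity $ab=ac \Leftrightarrow a(b-c)=0$. For the forward direction, assume $a$ is a regular element of $R$, so in particular $a\neq 0$ and hence $a\in \tT(R)^{\Abs}$; for any $b\neq c$ we have $b-c\neq 0$, whence $a(b-c)\neq 0$ and so $ab\neq ac$, and symmetrically on the right. For the converse, assume $a$ is regular in $\tT(R)$, so $a\neq 0$; if $ab=0$ for some $b\neq 0$, then $ab=a\cdot 0$ contradicts left regularity of $a$ in $\tT(R)$, and symmetrically on the right. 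Either route works.

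There is no real obstacle here: the lemma is essentially a compatibility check between the two definitions, and the only thing to keep straight is that the exclusion $a\in T^{\Abs}$ in Definition~\ref{def:reg.el} corresponds exactly to excluding $a=0$ in $R$, while the defining inequality translates to the absence of zero-divisor behaviour via translation by $0$. I would present the argument in the second form above, as it makes the correspondence between the two notions maximally transparent without invoking additional machinery.
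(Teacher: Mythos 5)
Your proposal is correct and essentially matches the paper's argument: the paper proves the forward direction by invoking Lemma~\ref{lem:unitoexist} with the witness $c=0$ and the converse by the substitution $b=t+c$ (i.e.\ the identity $ab=ac\Leftrightarrow a(b-c)=0$), which is exactly the content of your two routes. Your preferred direct presentation is a harmless repackaging of the same translation-by-$0$ computation.
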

\begin{proof}
The equivalence will be proven for left regularity only, the right regularity case in symmetric.
Let us assume that $a\in R$ is a regular element. Then there is no $b\in R\setminus \{0\}$ such that $ab=0$. Thus, by Lemma~\ref{lem:unitoexist}, if $c=0$ in \eqref{ue:2}, then $a$ is a regular element in $\tT(R)$, since $a$ is regular in $R$. 

Suppose that $a$ is regular in $\tT(R).$ Then $ab\not=ac$ implies $a(b-c)\not=0$. Therefore, by substituting  $b=t+c$, $at\not= 0$ for all $t\in R\setminus\{0\}$, which completes the proof. 
\end{proof}

Now we are ready to introduce the definition of a domain in clear analogy with the usual notion for rings.

\begin{definition}\label{def:domain}
	A pre-truss $T$ is called a domain if all elements of $T^\Abs$ are regular.
\end{definition}

In view of Lemma~\ref{lem.reg.ring},
a ring $R$ is a domain if and only if $\tT(R)$ is a domain.

\begin{lemma}\label{lem:cancel}
	 A near-truss $T$ is a domain if and only if it satisfies the cancellation property, that is for all $a\in T^{\Abs}$ and $b,b'\in T$,  each one of the equalities $ab=ab'$ or $ba=b'a$ implies that $b=b'$.
\end{lemma}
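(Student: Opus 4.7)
The proof is essentially a contraposition argument, and the near-truss hypothesis is largely inessential for the direct equivalence, though Lemma~\ref{lem:unitoexist} can be invoked to streamline one direction.

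The plan is as follows. For the forward implication, I would assume that $T$ is a domain and consider $a\in T^{\Abs}$ together with $b,b'\in T$ satisfying $ab=ab'$. If we had $b\neq b'$, then by left-regularity of $a$ (which holds since every element of $T^{\Abs}$ is regular) we would get $ab\neq ab'$, contradicting the assumption. Hence $b=b'$. The case $ba=b'a$ is handled identically using right-regularity.

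For the converse, I would assume the cancellation property and fix $a\in T^{\Abs}$. To verify left-regularity, take any $b,c\in T$ with $b\neq c$; if we had $ab=ac$, then cancellation would force $b=c$, contradicting our choice. Hence $ab\neq ac$, as required by Definition~\ref{def:reg.el}. Right-regularity is verified symmetrically. Since $a\in T^{\Abs}$ was arbitrary, $T$ is a domain. Alternatively, in the near-truss setting one may shorten the verification of left-regularity by appealing to Lemma~\ref{lem:unitoexist}: it suffices to exhibit one $c\in T$ such that $ab\neq ac$ for all $b\neq c$, and cancellation trivially supplies this for any choice of $c$.

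Since both implications reduce to direct logical manipulation of the definitions, there is no genuine obstacle. The only point worth being careful about is the role of $T^{\Abs}$: the cancellation property is only required (and only sensible) for non-absorbers, since a left absorber $e$ satisfies $eb=eb'$ for all $b,b'$ and thus trivially violates cancellation. This matches the restriction $a\in T^{\Abs}$ in both the definition of a domain and the statement of the lemma.
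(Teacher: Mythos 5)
Your proposal is correct and matches the paper's approach exactly: the paper dismisses this lemma as following ``immediately'' from the definitions of a regular element and a domain, and your argument is precisely the contrapositive unpacking of those definitions that makes this immediacy explicit. The optional appeal to Lemma~\ref{lem:unitoexist} and the remark about absorbers are harmless embellishments that do not change the substance.
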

\begin{proof}
This follows immediately for the definitions of a regular element and a domain.
\end{proof}

\begin{definition}\label{def:prime}
Let $T$ be a pre-truss. A non-empty paragon $P\subseteq T$ is said to be  \textbf{completely prime} if, for all $p\in P$, $a,b,c\in T$,
$$
[ab,ac,p]\in P\implies P_{p}^{a} \text{\  is\  an\  ideal\  or\  } [b,c,p]\in P
$$
 \center{and}
 $$
  [ba,ca,p]\in P\implies P_{p}^{a} \text{\  is\  an\  ideal\  or\  }[b,c,p]\in P.
$$
\end{definition}

\begin{lemma}
Let $T$ be a pre-truss and $P$ be a non-empty paragon. Then $P$ is completely prime if and only if, for all $p\in P$ and $t\in T$, $P_{p}^{t}$ is completely prime.
\end{lemma}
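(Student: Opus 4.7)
The plan is to reduce both sides of the biconditional to the same $p$-free, representative-independent condition on the partition $T/P$, from which the equivalence will be immediate.

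First, I would confirm that every $P_p^t$ (with $p\in P$, $t\in T$) is itself a paragon. By Lemma~\ref{lem:map}(4) the sub-heap relation $\sim_P$ coincides with $\sim_{P_p^t}$, so the equivalence classes of $\sim_{P_p^t}$ agree with those of $\sim_P$; since $P$ is a paragon, each of these classes is a closed normal sub-heap of $T$, which is exactly what is required for $P_p^t$ to qualify as a paragon.

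Next, I would rewrite both sides of the implications in Definition~\ref{def:prime} purely in terms of the relation $\sim_P$. For any class $Q$ of $\sim_P$ and any $q\in Q$, the membership $[x,y,q]\in Q$ is, by the very definition of $\sim_P$, equivalent to $x\sim_P y$. Applied to $Q=P$ with $q=p$, and again to $Q=P_p^t$ with $q$ any element of $P_p^t$, the premises $[ab,ac,p]\in P$ and $[ab,ac,q]\in P_p^t$ both translate to $ab\sim_P ac$, while the conclusions $[b,c,p]\in P$ and $[b,c,q]\in P_p^t$ both translate to $b\sim_P c$. Moreover, by Lemma~\ref{lem:map}(1)(iii), $(P_p^t)_q^a$ is the $\sim_{P_p^t}$-equivalence class of $a$, which coincides with the $\sim_P$-class of $a$, namely $P_p^a$; hence the conditions ``$P_p^a$ is an ideal'' and ``$(P_p^t)_q^a$ is an ideal'' are literally the same.

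Combining these observations, both complete-primeness conditions unravel to the single representative-free statement: for all $a,b,c\in T$, $ab\sim_P ac$ implies that the $\sim_P$-class of $a$ is an ideal or $b\sim_P c$, together with the symmetric right-sided clause. The only step requiring any genuine care is the translation of the ternary-membership conditions into the relation $\sim_P$, which is routine given Lemma~\ref{lem:map}; I do not anticipate any real obstacle beyond this bookkeeping.
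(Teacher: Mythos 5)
Your proposal is correct and follows essentially the same route as the paper: both rest on Lemma~\ref{lem:map}, namely that the $\sim_{P_p^t}$-classes coincide with the $\sim_P$-classes and that $(P_p^t)_q^a = P_p^a$, so that every instance of the complete-primeness condition for $P$ or for $P_p^t$ unwinds to the same statement about the partition $T/P$. Your packaging of this as a single representative-free condition is a tidy way to get both directions of the biconditional at once, whereas the paper writes out only the forward implication via the translation maps and leaves the (trivial) converse implicit.
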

\begin{proof}
Let us assume that $P$ is a completely prime paragon and let $p\in P$ and $t\in T$. We know that $P_{p}^{t}$ is a paragon (see comment that follows Definition~\ref{def.act.paragon}). Then, for all $a,b,c\in T$ and $q\in P$,
$[ab,ac,[q,p,t]]\in P_{p}^{t}$ implies $[[ab,ac,[q,p,t]],t,p]=[ab,ac,q]\in P$, since $(P_{p}^{t})_{t}^{p}=P$. Thus, $P_{q}^{a}$ is an ideal or $[b,c,q]\in P$. In view of $(P_p^t )_{[q,p,t]}^a = P_{q}^{a}$, the first option is equivalent to $(P_p^t )_{[q,p,t]}^a$ being an ideal and the second to $[b,c,[q,p,t]]\in P_{p}^{t}$. Hence $P_{p}^{t}$ fulfils the left condition to be a completely prime paragon. Analogously one can prove that $P_{p}^{t}$ satisfies the right condition. Therefore, $P_{p}^{t}$ is a completely prime paragon. 
\end{proof}

Unsurprisingly, the distributive laws yield simplification of the definition of a completely prime paragon.
\begin{lemma}\label{lem.prime.short}
Let $T$ be a  skew truss and $P$ be a paragon. Then $P$ is completely prime if and only if 
there exists $p \in P$ such that, for all $a,d\in T$, \
$$
[ad,ap,p]\in P\implies P_{p}^{a} \text{\  is\  an\  ideal\  or\  } d\in P 
$$
\begin{center}
and
\end{center}
$$
[da,pa,p]\in P\implies  P_{p}^{a} \text{\  is\  an\  ideal\  or\  } d\in P.
$$
\end{lemma}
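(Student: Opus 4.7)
The plan is to verify both implications, with the forward direction being routine and the backward direction requiring a small transport trick.

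For the forward direction, suppose $P$ is completely prime in the sense of Definition~\ref{def:prime}. Since $P$ is non-empty, pick any $p\in P$ as witness. Specialising the defining implications to $b:=d$ and $c:=p$ gives the simplified conditions, because $[d,p,p]=d$ by the Mal'cev identity, while the left-hand sides become $[ad,ap,p]$ and $[da,pa,p]$ respectively.

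For the backward direction, fix a $p\in P$ for which the simplified conditions hold, take an arbitrary $p'\in P$, $a,b,c\in T$, and assume $[ab,ac,p']\in P$; we must show that $P_{p'}^a$ is an ideal or $[b,c,p']\in P$. Two preliminary observations make this possible. First, by Lemma~\ref{lem:map}(1)(iii) both $P_p^a$ and $P_{p'}^a$ equal the equivalence class $\bar a$ of $a$ under $\sim_P$, so one is an ideal if and only if the other is. Secondly, since $P$ is closed under the heap operation, the identity $[ab,ac,p]=[[ab,ac,p'],p',p]$ transports the hypothesis to $[ab,ac,p]\in P$, and similarly $[b,c,p]\in P\iff [b,c,p']\in P$.

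Now set $d:=[b,c,p]$. Left distributivity (already available in any near-truss) gives $ad=a[b,c,p]=[ab,ac,ap]$, whence
\[
[ad,ap,p]=[[ab,ac,ap],ap,p]=[ab,ac,p]\in P.
\]
The simplified left condition then yields that either $P_p^a$ is an ideal or $d=[b,c,p]\in P$; transporting back via the equivalences above completes the left half. The right half is symmetric: with the same choice $d=[b,c,p]$, right distributivity (which is where the skew-truss assumption is genuinely used, rather than merely a near-truss one) gives $da=[ba,ca,pa]$ and $[da,pa,p]=[ba,ca,p]$, and the argument runs identically.

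The only conceptual subtlety, rather than a real obstacle, is recognising that the simplified condition pins down a \emph{single} basepoint $p\in P$, so the heap-closure of $P$ must be invoked to shift an arbitrary witness $p'\in P$ back to this fixed $p$; after that, the distributive laws do the remaining work.
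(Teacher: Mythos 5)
Your proof is correct and follows essentially the same route as the paper's: the bijection $d\mapsto[d,p,c]$ (equivalently $d:=[b,c,p]$) together with the two distributive laws turns $[ab,ac,p]$ into $[ad,ap,p]$ and $[b,c,p]$ into $d$. Your explicit treatment of the shift from a single witness $p$ to an arbitrary $p'\in P$ (via $P_p^a=P_{p'}^a=\bar a$ and the transport identity $[ab,ac,p]=[[ab,ac,p'],p',p]$) is a point the paper's one-line proof leaves implicit, and it is handled correctly.
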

\begin{proof}
It is sufficient to observe that, for every $b\in T$, $[b,c,p]$ can be substituted by some $d\in T$ since $[-,c,p]:T\lra T$ is a bijection with the inverse given by $[-,p,c]:T\lra T$. Hence, if $b=[d,p,c]$, $d=[[d,p,c],c,p]$, and so
$$
[ab,ac,p] = [a[d,p,c],ac,p]= [ad,ap,p] \;\; \& \;\; [ba,ca,p] = [[d,p,c]a,ca,p]= [da,pa,p],
$$
by the distributive laws and the axioms of a heap. This completes the proof.
\end{proof}

\begin{lemma}
If $P\subsetneq T$ is a completely prime paragon in a pre-truss $T$, then, for all $p \in P$ and for all left (right) absorbers  $a,a'\in T$, $P_{p}^{a}=P_{p}^{a'}$. 
\end{lemma}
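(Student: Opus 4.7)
The plan is to prove that the equivalence class $P_p^a$ of any left absorber $a$ is forced to be a (two-sided) ideal in $T$, which will make $\pi(a) \in T/P$ an absorber in the quotient pre-truss; uniqueness of absorbers in any pre-truss will then give $\pi(a) = \pi(a')$, that is, $P_p^a = P_p^{a'}$.

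First I will observe that when $a$ is a left absorber, $ab = ac = a$ for all $b, c \in T$, so
$[ab, ac, p] = [a, a, p] = p \in P$
automatically. The first implication in the definition of a completely prime paragon then yields, for every $b, c \in T$, the dichotomy: either $P_p^a$ is an ideal, or $[b, c, p] \in P$. Suppose towards a contradiction that $P_p^a$ is not an ideal. Then $[b, c, p] \in P$ must hold for all $b, c \in T$; specialising to $c = p$ gives $b = [b, p, p] \in P$ for every $b \in T$, so $P = T$, contradicting $P \subsetneq T$. Hence $P_p^a$ is a (two-sided) ideal. The same argument applied to $a'$ gives that $P_p^{a'}$ is also an ideal.

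Next I will translate this to the quotient. Since $a$ is a left absorber in $T$ and $\pi \colon T \to T/P$ is a pre-truss homomorphism, $\pi(a)$ is automatically a left absorber in $T/P$. Since $P_p^a$ is (in particular) a right ideal, for all $t \in T$ one has $at \in P_p^a$, i.e.\ $\pi(a)\pi(t) = \pi(a)$; hence $\pi(a)$ is also a right absorber, and therefore an absorber in $T/P$. The same holds for $\pi(a')$. As noted after Definition~\ref{def.truss}, absorbers in any pre-truss are unique (if $x$ is a left absorber and $y$ a right absorber, then $xy$ equals both $x$ and $y$), so $\pi(a) = \pi(a')$, which is exactly $P_p^a = P_p^{a'}$. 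The case of two right absorbers is handled symmetrically, using the second implication in Definition~\ref{def:prime} in place of the first.

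I do not anticipate any substantial obstacle: the structure of the argument is completely determined by the two ``alternatives'' afforded by complete primeness. The only delicate point to verify carefully is that the alternative ``$[b, c, p] \in P$ for all $b, c$'' genuinely collapses to $P = T$, which is just a matter of specialising $c = p$ and applying a Mal'cev identity.
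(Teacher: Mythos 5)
Your proof is correct and follows essentially the same route as the paper: the hypothesis of one of the two complete-primeness implications is trivially satisfied when $a$ is an absorber, the alternative ``$[b,c,p]\in P$ for all $b,c$'' is ruled out because $P\subsetneq T$, hence $P_p^a$ is an ideal, so $\pi(a)$ is a two-sided absorber in $T/P$, and absorbers are unique. The only slip is a convention mismatch: in this paper a \emph{left} absorber satisfies $ta=a$ (not $at=a$), so in the left-absorber case you should compute $[ba,ca,p]=[a,a,p]=p\in P$ and invoke the \emph{second} implication of Definition~\ref{def:prime}, reserving the first for right absorbers; by the symmetry of the argument nothing else changes.
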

\begin{proof}
Let $a$ be a left absorber. For all $b,c\in T$ and $p\in P$, $[ba,ca,p]=[a,a,p]=p\in P$, so $P_{p}^{a}$ is an ideal or $[b,c,p]\in P$. The second option is equivalent to $b\sim_{P} c$, for all $b,c\in T$. Observe, though, that since $P\neq T$, there exist $b,c\in T$ such that $b\not\sim_{P} c$. Therefore, $P_{p}^{a}$ is an ideal and $\|{a}\in T/P$ is an absorber. From the fact that if a truss has an absorber then it has only one left absorber one concludes that $P_{p}^{a}=P_{p}^{a'}$, for all left absorbers $a,a'$.
\end{proof}

\begin{theorem}\label{thm:dom-prime}
Let $T$ be a pre-truss. Then $P$ is a completely prime paragon if and only if $T/P$ is a domain.
\end{theorem}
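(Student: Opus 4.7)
The plan is to translate Definition~\ref{def:prime} directly into the statement that $T/P$ is a domain by means of Lemma~\ref{lem:abs}. The key dictionary between $T/P$ and the expressions appearing in the definition of a completely prime paragon reads as follows: for any $p\in P$ and $a\in T$, the set $P_{p}^{a}$ is precisely the equivalence class $\bar{a}\in T/P$ (by Lemma~\ref{lem:map}), and by Lemma~\ref{lem:abs} (together with its right-handed analogue, obtained by passing to the opposite multiplication) $P_{p}^{a}$ is a left (resp.\ right) ideal if and only if $\bar{a}$ is a left (resp.\ right) absorber in $T/P$; hence $P_{p}^{a}$ is an ideal if and only if $\bar{a}$ is the absorber of $T/P$. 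Moreover $[b,c,p]\in P$ says exactly that $\bar{b}=\bar{c}$, while $[ab,ac,p]\in P$ (respectively $[ba,ca,p]\in P$) says exactly that $\bar{a}\bar{b}=\bar{a}\bar{c}$ (respectively $\bar{b}\bar{a}=\bar{c}\bar{a}$).

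For the implication $(\Rightarrow)$, assume $P$ is completely prime and pick $\bar{a}\in (T/P)^{\Abs}$. To verify left regularity of $\bar{a}$, suppose $\bar{a}\bar{b}=\bar{a}\bar{c}$; the dictionary yields $[ab,ac,p]\in P$ for any $p\in P$, so complete primeness gives that $P_{p}^{a}$ is an ideal or $[b,c,p]\in P$. The first alternative is ruled out since $\bar{a}$ is not an absorber, hence $\bar{b}=\bar{c}$. Right regularity follows symmetrically from the second condition in Definition~\ref{def:prime}. Thus every non-absorber class is regular and $T/P$ is a domain.

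For the converse $(\Leftarrow)$, assume $T/P$ is a domain and suppose $[ab,ac,p]\in P$ for some $p\in P$, $a,b,c\in T$. Then $\bar{a}\bar{b}=\bar{a}\bar{c}$ in $T/P$: if $\bar{a}$ is the absorber then $P_{p}^{a}$ is an ideal, while otherwise $\bar{a}$ is regular by hypothesis, which forces $\bar{b}=\bar{c}$, i.e.\ $[b,c,p]\in P$. The right-handed completely-prime condition is handled identically using right regularity. Therefore $P$ is completely prime.

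The proof is thus essentially a translation through Lemma~\ref{lem:abs}, and the only genuine subtlety is to record that Lemma~\ref{lem:abs}, stated there for left absorbers and left ideals, carries over verbatim to right absorbers and right ideals (and hence to two-sided absorbers and ideals, invoking the uniqueness of an absorber noted after Definition~\ref{def.truss}). No nontrivial calculation inside $T$ is required beyond the identifications already established in Section~\ref{sec:3}.
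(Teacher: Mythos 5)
Your proof is correct and takes essentially the same route as the paper: the paper's own (very terse) argument is exactly this dictionary translation, identifying $\|{ab}=\|{ac}$ with $[ab,ac,p]\in P$, $\|b=\|c$ with $[b,c,p]\in P$, and ``$\|a$ is an absorber'' with ``$P_p^a$ is an ideal'' via Lemma~\ref{lem:abs}. Your version merely spells out the two implications and the left/right bookkeeping more explicitly.
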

\begin{proof}
We write $\|a$ for the class  of $a$ in $T/P$. The pre-truss $T/P$ is a domain if and only if, for all $\|a,\|b,\|c\in T/P$, $\|{ab}=\|{ac}$ implies that $\|b=\|c$ or $\|a$ is an absorber. The equality $\|{ab}=\|{ac}$ amounts to the   existence of $p\in P$ such that $[ab,ac,p]\in P$. Observe that $\|b=\|c$ if and only if $[b,c,p]\in P$, and $\|a$ is an absorber if and only if $P_{p}^{a}$ is an ideal. 
The proof proceeds analogously for the right cancellation property.
\end{proof}

\begin{remark}
Every paragon in a near-truss $\tT(B)$ associated with a skew brace $B$ is completely prime.
\end{remark}

\begin{corollary}
Let $R$ be a ring. An ideal $I$ is completely prime in $R$ if and only if $I$ is a completely prime paragon in $\tT(R)$.
\end{corollary}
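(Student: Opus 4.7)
The plan is to reduce the corollary to Theorem~\ref{thm:dom-prime} combined with Lemma~\ref{lem.reg.ring} and the standard ring-theoretic characterisation of completely prime ideals via quotients. The key point is that for a ring $R$ the truss $\tT(R)$ has $0$ as a (necessarily unique) absorber, so $\tT(R)^{\Abs}=R\setminus\{0\}$, and both notions of ``completely prime'' are reflected in whether the quotient is a domain.

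First I would verify that an ideal $I\subseteq R$ is automatically a paragon of $\tT(R)$. Indeed, the canonical ring epimorphism $\pi\colon R\lra R/I$ is a homomorphism of the associated trusses $\tT(R)\lra \tT(R/I)$, and $I=\pi^{-1}(0)$; so by Lemma~\ref{lem.ker.par}(1) the set $I$ is a paragon in $\tT(R)$. This morphism also induces an isomorphism of pre-trusses $\tT(R)/I\cong \tT(R/I)$: the heap equivalence classes of $\sim_I$ are precisely the additive cosets $a+I$ (since $[i,0,a]=i+a$), which agree with the underlying heap of $\tT(R/I)$, and the multiplication is preserved by $\pi$.

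Next, by Theorem~\ref{thm:dom-prime}, $I$ is a completely prime paragon in $\tT(R)$ if and only if $\tT(R)/I$ is a domain. Using the isomorphism from the previous step, this is equivalent to $\tT(R/I)$ being a domain. By Lemma~\ref{lem.reg.ring} (applied to the ring $R/I$), $\tT(R/I)$ is a domain precisely when $R/I$ is a domain as a ring, which by the standard ring-theoretic definition is equivalent to $I$ being completely prime in $R$. Chaining these equivalences gives the corollary.

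The only step that requires a little genuine work is the identification $\tT(R)/I\cong \tT(R/I)$, and in particular checking that the additive cosets of $I$ exactly match the sub-heap equivalence classes $I_{0}^{a}$; this is straightforward because $[a,b,c]=a-b+c$ in $\tT(R)$ and $0\in I$, so $I_{0}^{a}=a+I$. Everything else is an application of previously established results, and no direct manipulation of the completely prime paragon conditions is needed.
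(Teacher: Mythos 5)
Your proof is correct, but it takes a genuinely different route from the paper. The paper argues directly from the definition of a completely prime paragon (in the simplified form of Lemma~\ref{lem.prime.short}): it observes that $[ab,a0,0]=ab$ because $0$ is an absorber, and that $I_0^a$ is an ideal if and only if $a\in I$, so the two ``completely prime'' conditions translate into one another verbatim. You instead pass everything through the quotient: $I$ is a paragon by Lemma~\ref{lem.ker.par}, the sub-heap classes $I_0^a=a+I$ give an identification $\tT(R)/I\cong\tT(R/I)$ of pre-trusses, and then Theorem~\ref{thm:dom-prime} on the truss side and the standard characterisation of completely prime ideals on the ring side (linked by Lemma~\ref{lem.reg.ring} applied to $R/I$) reduce both statements to ``$R/I$ is a domain.'' Your approach buys conceptual clarity --- it exhibits the corollary as the compatibility of the two quotient-domain characterisations under $\tT$, with no manipulation of the closure conditions --- at the cost of having to verify the (easy but not explicitly recorded in the paper) isomorphism $\tT(R)/I\cong\tT(R/I)$; the paper's computation is shorter and entirely self-contained. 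The only point worth flagging is the degenerate case $I=R$, where $R/I$ is the zero ring: there $\tT(R/I)^{\Abs}=\emptyset$, so the domain condition holds vacuously on both sides, and the equivalence is unaffected; the paper's proof silently tolerates the same edge case.
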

\begin{proof}
Let us assume that $I$ is a completely prime ideal in $R$. Then, for all $a,b\in R$ and absorber $0\in I$,
$$
[ab,a0,0]=ab\in I\implies a\in I\; \text{or}\; b\in I.
$$
Thus, if $a\in I$, then $I_{0}^{a}=I$ is an ideal, and hence $I$ is a completely prime ideal in $\tT(R)$.

Conversely, assume that $I$ is a completely prime paragon in $\tT(R)$. For all $a,b\in \tT(R)$,
$$
ab=[ab,a0,0] \in I \implies I_{0}^{a}\text{\  is\  an\  ideal\  or\  } b\in I.
$$
Observe that $I_{0}^{a}$ is an ideal if and only if $a\in I$. Therefore, $I$ is a completely prime ideal in $R$. This completes the proof.
\end{proof}

\begin{lemma}
Let  $f:T\to T'$ be a morphism of pre-trusses. If $P$ is a completely prime paragon in $\im f$, then $f^{-1}(P)$ is a completely prime paragon in $T$.
\end{lemma}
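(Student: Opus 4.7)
The plan is to verify that $f^{-1}(P)$ satisfies both defining conditions of a completely prime paragon by pushing the hypotheses forward along $f$, applying the completely prime property of $P$ in $\im f$, and then pulling the conclusion back.

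First, by Lemma~\ref{lem.ker.par}, $f^{-1}(P)$ is already known to be a paragon in $T$, so only the completely prime condition requires attention. Pick $p\in f^{-1}(P)$ and $a,b,c\in T$, and suppose that $[ab,ac,p]\in f^{-1}(P)$. Applying $f$ and using that $f$ is a morphism of pre-trusses, we obtain
$$
[f(a)f(b),f(a)f(c),f(p)] \;=\; f([ab,ac,p]) \;\in\; P.
$$
Since $f(p)\in P$ and $P$ is completely prime in $\im f$, either $P_{f(p)}^{f(a)}$ is an ideal in $\im f$ or $[f(b),f(c),f(p)]\in P$. In the latter case, $f([b,c,p])\in P$, whence $[b,c,p]\in f^{-1}(P)$, giving one of the two required alternatives.

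The substantive step is to handle the first case, namely to show that if $P_{f(p)}^{f(a)}$ is an ideal in $\im f$, then $f^{-1}(P)_p^a$ is an ideal in $T$. For this I would establish the compatibility
$$
f^{-1}(P)_p^a \;=\; f^{-1}\!\left(P_{f(p)}^{f(a)}\right).
$$
The inclusion $\subseteq$ is immediate from the definition of the translation map $\tau_p^a$ and the fact that $f$ commutes with $[-,-,-]$. For the converse inclusion, given $x$ with $f(x)=[q',f(p),f(a)]$ for some $q'\in P$, one sets $q:=[x,a,p]$ and checks via Mal'cev that $f(q)=q'\in P$ and $[q,p,a]=x$, so $x\in f^{-1}(P)_p^a$. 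Once this equality is in hand, the fact (noted in the paper just after the definition of ideals) that pre-images of ideals under pre-truss homomorphisms are ideals in $T$ shows that $f^{-1}(P)_p^a$ is an ideal.

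The right-sided implication in Definition~\ref{def:prime} is handled by the symmetric argument: applying $f$ to the hypothesis $[ba,ca,p]\in f^{-1}(P)$, invoking the right part of complete primality of $P$, and using the same identity $f^{-1}(P)_p^a = f^{-1}(P_{f(p)}^{f(a)})$ to transfer the ideal conclusion back to $T$. The only genuine obstacle is the identity above, and it reduces to a short Mal'cev computation; everything else is a direct transport of structure through $f$.
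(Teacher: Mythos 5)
Your proof is correct and follows essentially the same route as the paper: push the hypothesis through $f$, invoke complete primality of $P$ in $\im f$, and transfer the ideal alternative back via the key computation $q=[x,a,p]$, $f(q)=q'$. The only cosmetic difference is at the end: you prove the full equality $f^{-1}(P)_p^a=f^{-1}\bigl(P_{f(p)}^{f(a)}\bigr)$ and use that preimages of ideals are ideals, whereas the paper proves only the inclusion $f^{-1}\bigl(P_{f(p)}^{f(a)}\bigr)\subseteq f^{-1}(P)_p^a$ and finishes with Lemma~\ref{lem:par-cont}; both are valid.
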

\begin{proof}
By Lemma~\ref{lem.ker.par}, $f^{-1}(P)$ is a paragon. For all $a,b,c\in T$ and $p\in f^{-1}(P)$, if $[ab,ac,p]\in f^{-1}(P)$, then
$$f([ab,ac,p])=[f(a)f(b),f(a)f(c),f(p)]\in P.
$$
This implies that $P_{f(p)}^{f(a)}$ is an ideal or  $f([b,c,p])=[f(b),f(c),f(p)]\in P$. Therefore, $[b,c,p]\in f^{-1}(P)$ or $P_{f(p)}^{f(a)}$ is an ideal. Let us assume that 
\[
z\in f^{-1}\left ( P_{f(p)}^{f(a)} \right ) =\{x\in T\;|\;\exists{q\in P}\;\; \text{ s.t. }\;\; f(z)=[q,f(p),f(a)]\} .
\]
 Then $f(z)=[q,f(p),f(a)]$, for some $q\in P$ and $f([z,a,p])=[f(z),f(a),f(p)]=q\in P$. Hence $z=[[z,a,p],p,a]\in f^{-1}(P)_{p}^{a}$ and $f^{-1}(P_{f(p)}^{f(a)})\subseteq f^{-1}(P)_{p}^{a}$. Therefore, $f^{-1}(P)_{p}^{a}\subseteq f^{-1}(P_{f(p)}^{f(a)})$ and by Lemma~\ref{lem:par-cont}, $f^{-1}(P)_{p}^{a}$ is an ideal. This completes the proof.
\end{proof}

We conclude this section with an example of a completely prime paragon and the corresponding quotient domain.

\begin{example}\label{ex.poly}
Let $O(x)$ be the set of all polynomials in $\ZZ[x]$ in which the sum of coefficients is odd. One can easily check that $O(x)$ is a sub-monoid of the multiplicative monoid $\ZZ[x]$ and a sub-heap of $\ZZ[x]$ with the standard operation 
$[p,q,r] = p-q+r$. All this means that $O(x)$ is a (commutative) truss. 

Take any $t_0,t_1\in O(x)$ and define 
$$
P(t_0,t_1) := \{ p\in O(x)\; |\; (t_1-t_0) \, \mbox{divides}\, (p-t_0)\}.
$$
Then $P(t_0,t_1)$ is a paragon in $O(x)$ and it is a completely prime paragon provided that $t_1-t_0$ is irreducible in $\ZZ[x]$.
\end{example}
\begin{proof}
Clearly, if $p-t_0$, $q-t_0$ and $r-t_0$ are divisible by $t_1-t_0$, then so is $[p,q,r]-t_0 =p-q+r -t_0$. Hence $P(t_0,t_1)$ is a sub-heap of $O(x)$. Note that $t_0\in P(t_0,t_1)$, and hence, for all $p\in P(t_0,t_1)$ and  $q\in O(x)$,
$$
[qp,qt_0,t_0] -t_0 = qp-qt_0 = q(p-t_0).
$$
Therefore, $[qp,qt_0,t_0] = [pq,t_0q,t_0]\in P(t_0,t_1)$, which means that $P(t_0,t_1)$  is a paragon. 

Now assume that $c=t_1-t_0$ is irreducible in $\ZZ[x]$, and take $a,b\in O(x)$ for which there exists $p\in P(t_0,t_1)$ such that $[ab,ap,p]\in P$, that is $c\mid a(b-t_0)$. Since $c$ is irreducible, then either $c \mid (b-t_0)$, in which case $b\in P$, or $c\mid a$, that is, there exists $q\in \ZZ[x]$ such that $a=cq$. In this case,
$$
P(t_0,t_1)_{p}^a = \{r-p+cq\;|\; r\in P(t_0,t_1)\}.
$$
Thus $P_{p}^a$ contains all elements of $O(x)$ divisible by $c$ (since $c\mid (r-p)$, for all $r,p\in P(t_0,t_1)$), and hence it is an ideal in $O(x)$. Combined with the commutativity of $O(x)$, Lemma~\ref{lem.prime.short} yields that $P(t_0,t_1)$ is a completely prime paragon.
\end{proof}

Note that in general in the situation described in Example~\ref{ex.poly},
$$
\|a =\|b \in O(x)/P(t_0,t_1) \quad \mbox{if and only if} \quad (t_1-t_0)\mid (a-b).
$$
So, for example, take $t_0 = x$ and $t_1 = x^2+x+1$. Then $c = t_1-t_0 = x^2 +1$ is an irreducible polynomial in $\ZZ[x]$ and $O(x)/P(x,x^2+x+1)$ is a domain that can be identified with the sub-truss $O(i)$ of the truss (ring) of Gaussian integers $\ZZ[i]$,  defined as 
$$
O(i) = \{m+ni \;|\; \mbox{$m+n$ is odd}\}.
$$

\section{Skew braces of fractions}\label{sec:5}
To summarise, up to now we have introduced the notions of a domain and  a completely prime paragon, so that as long as we start with a pre-truss that has a completely prime paragon we can quotient out by it and obtain a domain. The next, and most important step, is to introduce localisation for pre-trusses. As the main goal of this section is to produce braces from near-trusses we will consider near-trusses without left absorbers and we will focus on localisation in the entire near-truss (to construct a ``brace of fractions") following Ore's classic construction \cite{Ore:Ore}.  First observe that since not every ring can be localised the same is true for trusses. Following \cite{Ore:Ore} we start by defining a regular pre-truss.

\begin{definition}\label{def:reg}
A pre-truss $T$ is said to be \textbf{left regular} if $T$ is a domain and it satisfies the left 
Ore condition, that is,
  for all $x,y\in T^{\Abs}$,  there exist $r,s\in T^{\Abs}$ such that $rx=sy$. 
  \end{definition}
In other words, a pre-truss is 
left (resp.\ right) regular if and only if $T^{\Abs}$ is a left 
 Ore set.
Next, we define the fraction relation on $T^{\Abs}\times T$, by
$$ 
(b,a)\sim (b',a') \;\;\; \mbox{if and only if there exist $\beta,\beta'\in T^{\Abs}$, s.t.\ $\beta b=\beta'b'$ \text{and}  $\beta a = \beta' a'$}.
$$ 
This is an equivalence relation by the same arguments as in \cite[Section 2]{Ore:Ore}. The equivalence class of $(b,a)$ is denoted by  $\frac{a}{b}$ and called a \textbf{fraction}, and the quotient set  $T^{\Abs}\times T/\sim$ is denoted by $\qQ(T)$.

\begin{theorem}\label{thm:loc}(Ore localisation for regular pre-trusses) Let $T$ be a left regular pre-truss. Then  $\qQ(T)$
is a pre-truss with the following operations
\begin{blist}
\item
For all $\frac{a}{b},\frac{a'}{b'},\frac{a''}{b''}\in \qQ(T)$, the ternary operation is defined by
\begin{equation}\label{frac.heap}
\left[\frac{a}{b},\frac{a'}{b'},\frac{a''}{b''}\right]:=\frac{[\beta_1 a,\beta_2a',\beta_3a'']}{\beta_1 b}=\frac{[\beta_1 a,\beta_2 a',\beta_3a'']}{\beta_2b'}=\frac{[\beta_1 a,\beta_2a',\beta_3a'']}{\beta_3b''},
\end{equation}
where $\beta_1, \beta_2, \beta_3 $ are any elements of $T^\Abs$ such that  $ \beta_1 b=\beta_2 b'= \beta_3 b'' $. 
\item For all $\frac{a}{b},\frac{a'}{b'}\in \qQ(T)$, 
\begin{equation}\label{frac.mon}
\frac{a}{b}\cdot\frac{a'}{b'}:=\frac{\gamma a'}{\gamma' b},
\end{equation}
where $\gamma,\gamma' \in T^\Abs$ are such that $\gamma b'=\gamma' a$.
\end{blist}
Furthermore, $(\qQ(T)^{\Abs},\cdot)$ is a group. We will call  $\qQ(T)$ the \textbf{pre-truss of (left) fractions} of $T$.
\end{theorem}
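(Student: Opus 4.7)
The plan is to mimic Ore's classical localisation for (non-commutative) rings and semigroups, but adapted to the heap structure, and the key technical engine throughout will be an \emph{extended Ore property}: given any finite collection $b_1,\dots,b_n\in T^{\Abs}$ one can inductively find $\beta_1,\dots,\beta_n\in T^{\Abs}$ such that $\beta_1b_1=\cdots=\beta_nb_n$. This follows from Definition~\ref{def:reg} by induction, using at each step that $T^{\Abs}$ is closed under the semigroup operation (which is a consequence of $T$ being a domain, since a product of two regular elements is regular). I would establish this lemma first, because it is invoked at essentially every subsequent step.

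Next I would verify well-definedness of the ternary operation \eqref{frac.heap}. This splits into two claims: independence of the auxiliary choice of $\beta_1,\beta_2,\beta_3$ bringing the three denominators to a common form, and invariance under the equivalence relation $\sim$. For the first, given two choices $(\beta_1,\beta_2,\beta_3)$ and $(\beta_1',\beta_2',\beta_3')$, apply the extended Ore condition to the common values $\beta_1b$ and $\beta_1'b$ to pass to yet a third common denominator; the left cancellation property of a domain (Lemma~\ref{lem:cancel}) then forces the two resulting fractions to be equivalent. For the second, take $(b,a)\sim(b',a')$ and rewrite both triples over the same big denominator, then cancel. The same two-step strategy handles well-definedness of the product \eqref{frac.mon}: first one shows independence of $(\gamma,\gamma')$ by combining both choices via Ore and cancelling, then invariance under $\sim$ on each argument separately.

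With well-definedness secured, the heap axioms and semigroup associativity become routine: after bringing all fractions involved to a single common denominator using the extended Ore condition, the Mal'cev identities and heap associativity in $\qQ(T)$ reduce to the corresponding identities in $T$, and similarly the associativity of $\cdot$ reduces to associativity of multiplication in $T$ after one clears denominators. For the final claim that $(\qQ(T)^{\Abs},\cdot)$ is a group I would first locate a two-sided identity: taking any $c\in T^{\Abs}$, the fraction $\tfrac{c}{c}$ acts as an identity independent of $c$, by a short cancellation argument. Then, given $\tfrac{a}{b}\in\qQ(T)^{\Abs}$, i.e.\ with $a\in T^{\Abs}$, the Ore condition applied to $a$ and $b$ yields $r,s\in T^{\Abs}$ with $ra=sb$, and $\tfrac{b}{a}$ is a two-sided inverse; one must check that $\tfrac{a}{b}\in\qQ(T)^{\Abs}$ indeed forces $a\in T^{\Abs}$, which follows by unpacking the equivalence relation together with the domain hypothesis (if $a$ were an absorber then so would the class be).

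The hardest step is the well-definedness of the ternary operation, because three fractions with three different denominators have to be simultaneously aligned and every change of alignment must be shown to give an equivalent answer. This is where the extended Ore lemma and the cancellation property of a domain must be combined most carefully; once that is done, everything else is a reduction to identities in $T$ after clearing denominators.
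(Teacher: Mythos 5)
Your proposal follows essentially the same route as the paper: reduce all fractions to a common denominator via the (iterated) Ore condition, prove well-definedness of \eqref{frac.heap} and \eqref{frac.mon} in two stages (independence of the auxiliary multipliers, then of the representatives) using the cancellation property of a domain, reduce the heap and semigroup axioms to those of $T$, and exhibit $\frac{c}{c}$ as identity and $\frac{b}{a}$ as inverse. The only differences are organizational --- you state the finite common-denominator lemma explicitly and verify the semigroup structure directly where the paper cites Ore's Theorem~1 --- so the argument is essentially the paper's.
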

\begin{proof}
We follow closely the proof of \cite[Theorem 1]{Ore:Ore}. 
The multiplication of fractions \eqref{frac.mon} is defined in such a way that $\frac{a}{b}$ can be interpreted as $b^{-1}a$. Since it relies entirely on the properties of the semigroup $(T,\cdot)$, the arguments of the proof of \cite[Theorem 1]{Ore:Ore} (with no modification, apart from the conventions) yield that $(\qQ(T),\cdot)$ is a semigroup. 

It remains to be proven that $\qQ(T)$ is a heap. In fact, by the Ore condition we may assume that all fractions in the definition of the ternary operation \eqref{frac.heap}  on $\qQ(T)$ have common denominator, so that 
\begin{equation}\label{frac.common}
\left[\frac{a}{b},\frac{a'}{b},\frac{a''}{b}\right] = \frac{[\beta a,\beta a',\beta a'']}{\beta b},
\end{equation}
since in this case we can choose $\beta:=\beta_1=\beta_2=\beta_3$. Thus suffices it to prove that \eqref{frac.common} is well-defined, as then the heap axioms for $T$ will imply the corresponding axioms for the derived operation \eqref{frac.common}. We proceed in two steps. At first, we show that the formula \eqref{frac.common} does not depend on the choice of $\beta$; in the second stage we will prove that  it is also independent of the choice of the representatives $a,b$ for the class $\frac{a}{b}$. 

Choose another element $s \in T^{\Abs}$ such that 
\[
\left[\frac{a}{b},\frac{a'}{b},\frac{a''}{b}\right] = \frac{[s a, s a',s a'']}{s b}.
\]
There exist $g,g'\in T^{\Abs}$ such that $g\beta b=g's b$, 
which implies
\[
g\beta =g's,
\]
since $T$ is a domain. Therefore,
\[
g[\beta a,\beta a',\beta a'']=g'[s  a,  s a', s a''], \quad
g\beta b  = g's b.
\]
Consequently,
\[
\frac{[\beta a,\beta a',\beta a'']}{\beta b} = \frac{[s  a,  s a', s a'']}{s  b},
\]
which shows the independence of the formula \eqref{frac.common} of the the choice of $\beta$.

To prove that the ternary operation \eqref{frac.heap}  does not depend on the choice of the representatives  in each equivalence class, let  $(b,a),(b',a'),(b'',a''),(d,c),(d',c'),(d'',c'')\in T^\Abs\times T$ be such that 
$$
\frac{a}{b}=\frac{c}{d},\;\frac{a'}{b'}=\frac{c'}{d'},\; \frac{a''}{b''}=\frac{c''}{d''},
$$
and consider
\begin{equation}\label{frac.equ}
\left[\frac{a}{b},\frac{a'}{b'},\frac{a''}{b''}\right]=\frac{[\beta_1 a,\beta_2a',\beta_3a'']}{\beta_1 b},\qquad  \left[\frac{a}{b},\frac{a'}{b'},\frac{c''}{d''}\right]=\frac{[s_1 a,s_2a',s_3c'']}{s_1 b},
\end{equation}
for suitable $\beta_1,\beta_2, \beta_3,s_1,s_2,s_3\in T^\Abs$. Then there exist $g,g'\in T$, such that 
$$
g\beta_1 b=g\beta_2b'=g\beta_3b''=g's_1b=g's_2b'=g's_3d'',
$$ 
and, since $T$ is a domain,
$$
g\beta_1=g's_1,\;g\beta_2=g's_2.
$$
Thus both fractions in the equation \eqref{frac.equ} are equal if and only if $g\beta_3a''=g's_3c''$. Observe, however, that since $g's_3d''=g\beta_3b''$, $g\beta_3a''=g's_3c''$ as $\frac{a''}{b''}=\frac{c''}{d''}$. Therefore,
 $$
 \left[\frac{a}{b},\frac{a'}{b'},\frac{a''}{b''}\right]=\left[\frac{a}{b},\frac{a'}{b'},\frac{c''}{d''}\right].
 $$
The remaining equalities
$$
\left[\frac{a}{b},\frac{a'}{b'},\frac{c''}{d''}\right]=\left[\frac{a}{b},\frac{c'}{d'},\frac{c''}{d''}\right]\, \, \, \text{and} \, \, \, \left[\frac{a}{b},\frac{c'}{d'},\frac{c''}{d''}\right]=\left[\frac{c}{d},\frac{c'}{d'},\frac{c''}{d''}\right],
$$
are proven in a similar way. This completes the proof that  
the definition of the ternary operation \eqref{frac.heap} does not depend on the choice of representatives.

 Finally, observe that if $a\in \Abs(T)$ then the class $\frac{a}{b}$ is an absorber and it is obviously unique. One can easily check that the class $\frac{b}{b}$ for $b\in T^{\Abs}$ is a neutral element of $(\qQ(T)^{\Abs},\cdot)$ and that if $a\not \in T^{\Abs}$ then $\frac{a}{b}$ is a two-sided inverse to $\frac{b}{a}$. Thus $(\qQ(T)^{\Abs},\cdot)$ is a group. This completes the proof of the theorem.
\end{proof}

From the fact that one can find a common denominator to any system of fractions one can observe that additional properties of $T$ are carried over to $\qQ(T)$.

\begin{proposition}\label{prop:loc}
Let $T$ be a regular pre-truss.
\begin{zlist}
\item If $T$ is  Abelian, then so is $\qQ(T)$.
\item If $T$ is a near-truss, then $\qQ(T)$ is a near-truss.
\item If $T$ is a skew truss, then $\qQ(T)$ is a skew-truss.
\end{zlist}
\end{proposition}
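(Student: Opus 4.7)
The approach is uniform across all three parts: exploit the Ore condition to represent any finite collection of fractions by elements sharing a common denominator, so that the heap operation \eqref{frac.heap} collapses to \eqref{frac.common} and the required identity in $\qQ(T)$ is inherited from the corresponding identity for the numerators in $T$.

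For (1), given three fractions $\frac{a_i}{b_i}$, I would pick $\beta_i\in T^{\Abs}$ with $\beta_1b_1=\beta_2b_2=\beta_3b_3$; then $\left[\frac{a_1}{b_1},\frac{a_2}{b_2},\frac{a_3}{b_3}\right]=\frac{[\beta_1a_1,\beta_2a_2,\beta_3a_3]}{\beta_1b_1}$ while $\left[\frac{a_3}{b_3},\frac{a_2}{b_2},\frac{a_1}{b_1}\right]=\frac{[\beta_3a_3,\beta_2a_2,\beta_1a_1]}{\beta_3b_3}$, and these agree once $T$ is Abelian. For (2), I would first bring the three fractions inside the ternary to a common denominator $D=\beta_id_i$, then pick $\gamma,\gamma'\in T^{\Abs}$ with $\gamma D=\gamma'a$. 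The multiplication formula and left distributivity in $T$ give
$$
\frac{a}{b}\cdot\left[\frac{c_1}{d_1},\frac{c_2}{d_2},\frac{c_3}{d_3}\right]=\frac{[\gamma\beta_1c_1,\gamma\beta_2c_2,\gamma\beta_3c_3]}{\gamma'b}.
$$
The key point is that the same $\gamma,\gamma'$ compute each individual product $\frac{a}{b}\cdot\frac{c_i}{d_i}=\frac{\gamma\beta_ic_i}{\gamma'b}$, since $(\gamma\beta_i)d_i=\gamma'a$ for all $i$; these share the denominator $\gamma'b$, so their heap combination reproduces the displayed right-hand side.

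Part (3) is the delicate one. I would express the left-hand side of right distributivity as $\frac{\gamma a}{\gamma'D}$, where $D=\beta_id_i$ and $\gamma b=[\gamma'\beta_1c_1,\gamma'\beta_2c_2,\gamma'\beta_3c_3]$ by left distributivity in $T$. For the right-hand side, I would compute each $\frac{c_i}{d_i}\cdot\frac{a}{b}=\frac{\delta_ia}{\delta_i'd_i}$ separately, then pass to a further common denominator $D'=\mu_i\delta_i'd_i$ by Ore, and use right distributivity in $T$ to obtain $\frac{[\mu_1\delta_1,\mu_2\delta_2,\mu_3\delta_3]a}{D'}$. To identify these two fractions, I would select $\alpha,\alpha'\in T^{\Abs}$ with $\alpha\gamma'D=\alpha'D'$; right cancellation by the regular elements $d_i$ yields $\alpha\gamma'\beta_i=\alpha'\mu_i\delta_i'$ for each $i$, and a final application of left distributivity combined with right cancellation by $b$ delivers $\alpha\gamma=\alpha'[\mu_1\delta_1,\mu_2\delta_2,\mu_3\delta_3]$. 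Right-multiplication by $a$ then closes the fraction equivalence.

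Parts (1) and (2) reduce essentially to a single line once common denominators are in hand; the main obstacle is the careful bookkeeping in (3), where two nested applications of the Ore condition must be reconciled through both distributive laws in $T$ together with cancellation by regular elements of the domain. The sub-case in which $a$ is an absorber of $T$ is trivial, since then $\frac{a}{b}$ is an absorber of $\qQ(T)$ and both sides of the right distributive law coincide with it.
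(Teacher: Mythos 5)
Your proposal is correct and follows essentially the same route as the paper: reduce to common denominators so that the heap operation takes the form \eqref{frac.common}, transport the distributive identities of $T$ to the numerators, and in the right-distributive case compare the two resulting single fractions by a further application of the Ore condition together with cancellation by a regular denominator. The only cosmetic difference is that the paper invokes the well-definedness established in Theorem~\ref{thm:loc} to assume common denominators from the outset, which removes the extra $\beta_i$-bookkeeping you carry through parts (2) and (3).
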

\begin{proof}
It is sufficient to consider heap operations of fractions with a common denominator, that is, those given by the formula \eqref{frac.common}.
Statement (1) follows immediately from \eqref{frac.common}.

If $T$ is a near-truss, then
$$
\left[\frac{a}{b},\frac{a'}{b},\frac{a''}{b}\right] = \frac{[\beta a,\beta a',\beta a'']}{\beta  b} = \frac{\beta [ a,a', a'']}{\beta b} = \frac{[a,a',a'']}{b}.
$$
 Take any $\frac{c}{d}, \frac{a}{b},\frac{a'}{b},\frac{a''}{b}\in \qQ(T)$ and $\gamma,\gamma'\in T^\Abs$ such that $\gamma b = \gamma' c$, and compute
$$
\begin{aligned}
\frac{c}{d}\cdot \left[\frac{a}{b},\frac{a'}{b},\frac{a''}{b}\right] &= \frac{c}{d}\cdot\frac{[a,a',a'']}{b}
=\frac{\gamma[a,a',a'']}{\gamma' d} = \frac{[\gamma a,\gamma a', \gamma a'']}{\gamma' d}\\
&=  \left[\frac{\gamma a}{\gamma' d},\frac{\gamma a'}{\gamma' d}, \frac{\gamma a''}{\gamma' d}\right] = \left[\frac{c}{d}\cdot \frac{a}{b},\frac{c}{d}\cdot \frac{a'}{b}, \frac{c}{d}\cdot \frac{a''}{b}\right].
\end{aligned}
$$ 
Hence the left distributive law holds, and this proves statement (2).

To prove (3) we take  $\frac{c}{d}, \frac{a}{b},\frac{a'}{b},\frac{a''}{b}\in \qQ(T)$ and $\gamma,\gamma ' \in T^{\Abs}$ such that   $\gamma d=\gamma ' [a,a',a'']$. Then 
$$
\left[\frac{a}{b},\frac{a'}{b},\frac{a''}{b}\right]\cdot \frac{c}{d}=\frac{[ a,a',a'']}{b}\cdot \frac{c}{d}=\frac{\gamma c}{\gamma'b}.
$$
On the other hand, using the definitions \eqref{frac.heap} and \eqref{frac.mon}  and the right distributivity in $T$, we obtain
$$
\left[\frac{a}{b}\cdot\frac{c}{d},\frac{a'}{b}\cdot\frac{c}{d},\frac{a''}{b}\cdot\frac{c}{d} \right]=
\left[\frac{\gamma_1c}{\gamma_1' b},\frac{\gamma_2c}{\gamma_2'b},\frac{\gamma_3c}{\gamma_3 'b}\right]=\frac{[s_1\gamma_1,s_2\gamma_2,s_3\gamma_3]c}{s_1\gamma_1 'b},
$$
where $s_1,s_2,s_3,\gamma_{1},\gamma_2,\gamma_3,\gamma_{1}',\gamma_2',\gamma_3'\in T^{\Abs}$ are such that 
\begin{equation}\label{r.s.gam}
\gamma_1'a=\gamma_1b''',\; \gamma_2'a'=\gamma_2 b''',\; \gamma_3'a''=\gamma_3b''', \;  s_1\gamma_1 '=s_2\gamma_2 '=s_3\gamma_3 '.
\end{equation}
Let $h,h'\in T^{\Abs}$ be such that 
\begin{equation}\label{h.gam}
h\gamma'=h's_1\gamma_1'.
\end{equation}
Then, using the distributive laws in $T$, \eqref{r.s.gam} and \eqref{h.gam}, we find
$$
\begin{aligned}
 h\gamma d& =h\gamma' [ a,a',a'']=[h\gamma 'a,h\gamma 'a',h\gamma 'a'']=[h's_1\gamma_1'a,h's_1\gamma_1'a',h's_1\gamma_1'a'']
\\ &=h'[s_1\gamma_1'a,s_2\gamma_2'a',s_3\gamma_3'a'']=h'[s_1\gamma_1d,s_2\gamma_2d,s_3\gamma_3d]=
 h'[s_1\gamma_1,s_2\gamma_2,s_3\gamma_3]d.
 \end{aligned}
$$
The right cancellation property yields
$$
 h\gamma=h'[s_1\gamma_1,s_2\gamma_2,s_3\gamma_3],
 $$
which in view of \eqref{h.gam} implies that
$$
\frac{\gamma c}{\gamma' b}=\frac{[s_1\gamma_1,s_2\gamma_2,s_3\gamma_3]c}{s_1\gamma_1 'b}\, .
$$
Therefore, also the right distributive law holds in the near-truss $\qQ(T)$.
\end{proof}

The construction of the truss of quotients is universal in the following sense.

\begin{proposition}\label{prop.loc.univ}
Let $T$ be a regular pre-truss. Then
\begin{zlist}
\item For any   $b\in T^{\Abs}$,
$$
\iota_b:T\lra \qQ(T), \qquad a\lto \frac{ba}{b},
$$
is a monomorphism of semigroups, and it is a monomorphism of trusses provided $T$ is a near- or skew-truss.
\item If $T$ is a unital pre-truss then  $\iota_1$ is a monomorphism of unital trusses. Furthermore, for  any brace-type near-truss $B$ and any unital truss homomorphism $f: T\lra B$, there exists a  unique unital truss homomorphism $\hat{f}: \qQ(T)\lra B$ rendering commutative the following diagram:
$$
\xymatrix{T \ar[rr]^-{\iota_1} \ar[rd]_f && \qQ(T) \ar[ld]^-{\hat{f}} \\
& B. & }
$$
\end{zlist}
\end{proposition}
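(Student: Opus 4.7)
The plan for part (1) is to verify each of the required properties in turn. For the semigroup morphism condition, I will expand $\iota_b(a)\iota_b(a')=\tfrac{ba}{b}\cdot\tfrac{ba'}{b}$ using the multiplication formula \eqref{frac.mon}: choosing $\gamma,\gamma'\in T^{\Abs}$ with $\gamma b=\gamma'(ba)$ (which exist by the left Ore condition since $T$ is regular), the product equals $\tfrac{\gamma(ba')}{\gamma' b}$. I will then verify that this is equivalent to $\iota_b(aa')=\tfrac{(ba)a'}{b}$ by taking common multipliers $\alpha=1$ and $\alpha'=\gamma'$: the relation $\gamma b=\gamma'(ba)$ together with associativity gives $\gamma(ba')=\gamma'(ba)a'$ as required. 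For the heap morphism condition in the near-truss case, the common-denominator identity \eqref{frac.common} combined with left distributivity yields
$$
[\iota_b(a),\iota_b(a'),\iota_b(a'')]=\frac{[ba,ba',ba'']}{b}=\frac{b[a,a',a'']}{b}=\iota_b([a,a',a'']).
$$
Injectivity follows from two applications of cancellation in the domain $T$: $\tfrac{ba}{b}=\tfrac{ba'}{b}$ gives $\beta b=\beta' b$ and $\beta(ba)=\beta'(ba')$ for some $\beta,\beta'\in T^{\Abs}$, whence $\beta=\beta'$ and then $a=a'$.

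For part (2), $\iota_1$ is unital because $\iota_1(1)=\tfrac{1}{1}$ is the identity of the group $(\qQ(T)^{\Abs},\cdot)$ from Theorem~\ref{thm:loc}. For the universal property, my plan is to define
$$
\hat{f}\left(\tfrac{a}{b}\right) := f(b)^{-1}f(a),
$$
using inversion in the group $(B,\cdot)$, available since $B$ is brace-type, and to verify the following in order: (i) well-definedness, by applying $f$ to the defining relations $\beta b=\beta' b'$ and $\beta a=\beta' a'$ and rearranging in $B$; (ii) multiplicativity, where the relation $\gamma b'=\gamma' a$ defining the product of fractions, once passed through $f$, translates precisely to the identity needed to equate $\hat f(\tfrac{a}{b}\cdot\tfrac{a'}{b'})$ with $\hat f(\tfrac{a}{b})\hat f(\tfrac{a'}{b'})$; (iii) heap compatibility, which reduces via common denominators to the identity $f(b)^{-1}[f(a),f(a'),f(a'')]=[f(b)^{-1}f(a),f(b)^{-1}f(a'),f(b)^{-1}f(a'')]$ in $B$; (iv) preservation of $1$ and commutativity of the triangle, both immediate from $\hat f(\iota_1(a))=f(1)^{-1}f(a)=f(a)$ and $\hat f(\tfrac{1}{1})=1_B$; (v) uniqueness, obtained from the factorisation $\tfrac{a}{b}=\iota_1(b)^{-1}\iota_1(a)$, which I will verify by direct computation using \eqref{frac.mon} (noting that $\tfrac{b}{1}\cdot\tfrac{1}{b}=\tfrac{1}{1}$ and $\tfrac{1}{b}\cdot\tfrac{a}{1}=\tfrac{a}{b}$), so that any unital truss morphism extending $f$ must agree with $\hat f$ on every fraction.

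The main obstacle I anticipate is step (iii). The element $f(b)^{-1}$ need not lie in the image of $f$, so compatibility with the ternary operation cannot be read off from $f$ being a truss morphism alone. The crucial input is the left distributive law in the brace-type near-truss $B$, which allows $f(b)^{-1}\in B$ to be pushed through the heap operation; this is precisely where the near-truss structure of the codomain (and not merely its group structure) is essential, and it is also why brace-type rather than an arbitrary group target is needed for the universal property to make sense. Once this distributive identity is applied, the verification of (iii) becomes routine, and the remaining steps present no further difficulty.
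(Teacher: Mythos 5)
Your proposal follows essentially the same route as the paper: part (1) is verified by computing $\iota_b(a)\cdot\iota_b(a')$ via \eqref{frac.mon} and comparing with $\iota_b(aa')$, with the heap compatibility coming from \eqref{frac.common} and left distributivity; part (2) defines $\hat f\left(\frac{a}{b}\right)=f(b)^{-1}f(a)$ and checks well-definedness, multiplicativity, heap compatibility via common denominators and the left distributive law in $B$, and uniqueness via the factorisation $\frac{a}{b}=\frac{1}{b}\cdot\frac{a}{1}$ — all exactly as in the paper, and your identification of the distributive law in $B$ as the key input for step (iii) is on target.

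One small repair is needed in part (1): there $T$ is only a regular pre-truss, not assumed unital, so you may not take the multiplier $\alpha=1$ (the equivalence requires $\beta,\beta'\in T^{\Abs}$, and $1$ need not exist). Instead, as the paper does, use the Ore condition to produce $\beta,\beta'\in T^{\Abs}$ with $\beta b=\beta'\gamma' b$, and then $\beta baa'=\beta'\gamma' baa'=\beta'\gamma ba'$ gives the required equivalence of $\frac{baa'}{b}$ and $\frac{\gamma ba'}{\gamma' b}$. This is a purely cosmetic fix and does not affect the rest of your argument.
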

\begin{proof} (1) Since $T$ is regular, $\iota_b$ is an injective map.  For all $a,a'\in T$, 
$$
\iota_b \left(aa'\right) = \frac{ba a'} {b} \quad \& \quad \iota_b \left(a\right)\cdot \iota_b \left(a'\right) = \frac{ba}{b}\cdot \frac {ba'}{b} =  \frac{\gamma ba'}{\gamma' b},
$$
where $\gamma, \gamma'$ are such that $\gamma b=\gamma' ba$. Take any $\beta,\beta'\in T$ such that $\beta b = \beta'\gamma'b$. Then
$$
\beta baa' = \beta'\gamma'b aa' = \beta'\gamma b a',
$$
which means that $\iota_b \left(aa'\right) = \iota_b \left(a\right)\cdot \iota_b \left(a'\right)$ as required.

In the case of a near- or skew-truss, that  $\iota_b$ is a homomorphism of trusses follows by  \eqref{frac.common}  and the left distributive law.

(2) The monomorphism of semigroups $\iota_1$ preserves the heap operation since $1$ is the multiplicative identity in $T$. 

Assume that $f:T\to B$ is a unital homomorphism of trusses and, for all fractions $\frac{a}{b}\in \qQ(T)$, define
$$
\hat{f} : \qQ(T)\lra B,\qquad \frac{a}{b}\lto f(b)^{-1}f(a).
$$
This is well defined since two fractions $\frac{a}{b}$ and $\frac{a'}{b'}$ are identical if and only if there are $\beta,\beta'$ such that $\beta a = \beta' a'$ and $\beta b = \beta' b'$, in which case
$$
\begin{aligned}
\hat{f}\left(\frac{a}{b}\right) &=  f(b)^{-1}f(a) =f(b)^{-1}f(\beta)^{-1} f(\beta) f(a)\\
&= f(\beta b )^{-1}f(\beta a ) = f(\beta' b' )^{-1}f(\beta' a' ) =f(b')^{-1}f(a') =\hat{f}\left(\frac{a'}{b'}\right),
\end{aligned}
$$
by the multiplicativity of $f$. By the same token,
for all $\frac{a}{b}, \frac{a'}{b'} \in \qQ(T)$, 
$$
\hat{f}\left(\frac{a}{b}\cdot \frac{a'}{b'}\right) = \hat{f}\left(\frac{\gamma a'}{\gamma' b}\right) = f(\gamma' b)^{-1}f(\gamma a')= f(b)^{-1}f(\gamma')^{-1} f(\gamma) f(a'),
$$
where $\gamma,\gamma'\in T$ are such that $\gamma b' =\gamma' a$. Applying $f$ to both sides of this equality and using the multiplicative property to $f$ we obtain 
$$
f(\gamma')^{-1} f(\gamma) = f(a) f(b')^{-1},
$$
and hence
$$
\hat{f}\left(\frac{a}{b}\cdot \frac{a'}{b'}\right) =  f(b)^{-1}f(a) f(b')^{-1} f(a') = \hat{f}\left(\frac{a}{b}\right) \hat{f}\left( \frac{a'}{b'}\right)  ,
$$
that is $\hat{f}$ is a homomorphism of multiplicative groups. To check that  $\hat{f}$ is a heap morphism it is enough to consider fractions with a common denominator and then
$$
\begin{aligned}
\hat{f}\left(\left[\frac{a}{b}, \frac{a'}{b}, \frac{a''}{b}\right]\right) &= f(b)^{-1}\left[f(a),f(a') f(a'')\right]\\
 &= \left[f(b)^{-1}f(a),f(b)^{-1}f(a') f(b)^{-1}f(a'')\right]\\
 & = \left[\hat{f}\left(\frac{a}{b}\right), \hat{f}\left(\frac{a'}{b}\right), \hat{f}\left(\frac{a''}{b}\right)\right], 
\end{aligned}
$$
by the fact that $f$ is a heap homomorphism and the left distributive law in $B$. That $\hat{f}\circ \iota_1 =f$ follows by the unitality of $f$.

Suppose that there exists a unital truss homomorphism $\hat{g}: \qQ(T)\lra B$ such that $\hat{g}\circ \iota_1 =f$. Note that 
\begin{equation}\label{frac.a.b}
\frac{a}{b} = \frac{1}{b}\cdot \frac{a}{1}.
\end{equation}
 In particular,
$$
1 = \hat{g}\left(\frac{1}{1}\right) =  \hat{g}\left(\frac{1}{b}\cdot \frac{b}{1}\right) = \hat{g}\left(\frac{1}{b}\right) f(b),
$$
where the last equality follows by the splitting assumption $\hat{g}\circ \iota_1 =f$. Hence $\hat{g}\left(\frac{1}{b}\right)= f(b)^{-1}$ and the equality $\hat{g} = \hat{f}$ follows by the multiplicativity of $\hat{g}$ and equations \eqref{frac.a.b}.
\end{proof}

The following corollary provides one with the method of constructing skew braces, which might be considered as one of the main results of this paper.

\begin{corollary}\label{cor:qb}
If $T$ is a regular near-truss without an absorber, then $\qQ(T)$ is a brace-type near-truss, that is, for all $b\in T$, the retract of $\qQ(T)$ at ${\frac{b}{b}}$ with the product \eqref{frac.mon}  is a skew brace.
\end{corollary}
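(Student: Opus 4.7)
The plan is to combine the structural results already established for $\qQ(T)$ with the characterisation of brace-type near-trusses from Example~\ref{ex:skew-brace}. The corollary follows from three ingredients: Proposition~\ref{prop:loc}(2), which transports the near-truss structure from $T$ to $\qQ(T)$; Theorem~\ref{thm:loc}, which shows that $(\qQ(T)^{\Abs},\cdot)$ is always a group; and the absence of an absorber in $T$, which will be used to conclude that $\qQ(T)^{\Abs}=\qQ(T)$, thereby promoting the multiplicative group on the non-absorbing part to a group on the whole of $\qQ(T)$.

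First, since $T$ is a near-truss, Proposition~\ref{prop:loc}(2) yields that $\qQ(T)$ is a near-truss. Next, because $T$ has no absorber, $T^{\Abs}=T$; in particular, for every fraction $\frac{a}{b}\in\qQ(T)$ (where $b\in T^{\Abs}=T$) the element $a$ also lies in $T^{\Abs}=T$, so $\frac{b}{a}$ is itself a well-defined fraction. By the computation at the end of the proof of Theorem~\ref{thm:loc}, $\frac{b}{a}$ is a two-sided multiplicative inverse of $\frac{a}{b}$, with neutral element $\frac{b}{b}$ for any $b\in T$. Hence $(\qQ(T),\cdot)$ is a group.

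Finally, the assumption that $T$ has no absorber transfers to $\qQ(T)$: an absorber in $\qQ(T)$ would force the group $(\qQ(T),\cdot)$ to be trivial, but in any case, with every fraction invertible, the multiplicative structure is a group on all of $\qQ(T)$. Applying Example~\ref{ex:skew-brace} to the near-truss $\qQ(T)$ then gives that $\qQ(T)$ is brace-type, and the retract $(\qQ(T),+_{\frac{b}{b}},\cdot)$ at the multiplicative identity $\frac{b}{b}$ is a skew brace. Since the neutral element is independent of the choice of representative $b\in T$ (different choices yield isomorphic retracts by Section~\ref{sec:2}), this holds for every $b\in T$ as asserted.

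There is no real obstacle here: this is a clean corollary that repackages Theorem~\ref{thm:loc} and Proposition~\ref{prop:loc} through the lens of Example~\ref{ex:skew-brace}. The only point that requires a brief check is that the invertibility argument from the proof of Theorem~\ref{thm:loc}, originally stated for the group $(\qQ(T)^{\Abs},\cdot)$, applies verbatim to every fraction once we note that $T^{\Abs}=T$.
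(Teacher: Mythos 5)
Your proof is correct, but it reaches the conclusion by a genuinely different route than the paper. The paper's proof spends essentially all of its effort showing that $\qQ(T)$ has no absorbers: it supposes some $\frac{a}{b}$ satisfies $\frac{c}{d}\cdot\frac{a}{b}=\frac{a}{b}$ for all $\frac{c}{d}$, and derives $c=d$ from the Ore condition together with the cancellation property of the domain $T$, contradicting the fact that a pre-truss without absorbers has at least two elements; only then does it invoke the conclusion of Theorem~\ref{thm:loc} that $(\qQ(T)^{\Abs},\cdot)$ is a group, now knowing that $\qQ(T)^{\Abs}=\qQ(T)$. You bypass the absorber analysis entirely: since $T^{\Abs}=T$, every fraction $\frac{a}{b}$ has its numerator in $T^{\Abs}$, so $\frac{b}{a}$ is a legitimate fraction, and the explicit computation inside the proof of Theorem~\ref{thm:loc} exhibits it as a two-sided inverse of $\frac{a}{b}$ (note that the paper's phrase ``if $a\not\in T^{\Abs}$'' there is a typo for $a\in T^{\Abs}$, which you have implicitly and correctly repaired); hence $(\qQ(T),\cdot)$ is a group outright and Example~\ref{ex:skew-brace} applies. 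Your argument is shorter and arguably cleaner, since it never needs to determine $\qQ(T)^{\Abs}$; the small price is that you lean on the interior of the proof of Theorem~\ref{thm:loc} (the inverse formula and the fact that $\frac{b}{b}$ is a two-sided identity on all of $\qQ(T)$, not merely on $\qQ(T)^{\Abs}$) rather than only on its stated conclusion, whereas the paper's absorber-free argument is self-contained from the regularity hypotheses. Two cosmetic remarks: the aside that an absorber would force the group to be trivial is not needed once every element is invertible, and the elements $\frac{b}{b}$ for varying $b$ are in fact all equal in $\qQ(T)$ (by the Ore condition), so the final sentence about isomorphic retracts can be sharpened to an identity.
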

\begin{proof}
Observe that if $T$ has no absorbers then $\qQ(T)$ has no absorbers either. Indeed, suppose that there exists $\frac{a}{b}\in \qQ(T)$ such that, for all $\frac{c}{d}\in \qQ(T)$,  $\frac{c}{d}\cdot\frac{a}{b}=\frac{a}{b}$. Since $T$ has no absorbers, it has at least two elements, and hence, in particular we may consider $c\neq d$. Then 
there exist $\gamma,\gamma '\in T$, such that $\frac{\gamma a}{\gamma' d}=\frac{\gamma a}{\gamma b}$ and $\gamma 'c=\gamma b$. Thus  $\frac{\gamma a}{\gamma' d}=\frac{\gamma a}{\gamma' c}$, so there exist $\beta, \beta' \in T$ such that  $\beta\gamma'd=\beta'\gamma'c$ and $\beta\gamma a=\beta' \gamma a$. By regularity, $\beta=\beta'$ and $c=d$, which is the required contradiction. Therefore, $\frac{a}{b}$ is not an absorber for all $a,b\in T$. Now, since $\qQ(T)$ is a group with multiplication  and identity $\frac{b}{b}$, the retract of $\qQ(T)$ in $\frac{b}{b}$ is a skew brace by \cite[Remark~3.13]{Brz:par}.
\end{proof}

Note in passing that if $T$ satisfies the same assumptions as in Corollary~\ref{cor:qb}, but there exists an absorber in $T$, then $\qQ(T)$ is associated with a near-field.

\begin{example}
Let us consider $2\ZZ+1$. It is a domain satisfying the Ore condition, thus it is a regular truss and we can localise it in itself. Since $2\ZZ+1$ is commutative, the construction is much simpler than the one presented in the proof of Theorem \ref{thm:loc}. One can easily check that $\qQ(2\ZZ+1)=\frac{2\ZZ+1}{2\ZZ+1}:=\left\{\frac{2p+1}{2q+1}\;|\; p,q\in \ZZ\right\}$. The two-sided brace associated with this truss is the retract in $1$, i.e.\ the triple $(\qQ(2\ZZ+1),[-,1,-],\cdot)$.

Similarly, the truss $O(x)$ of integer polynomials with coefficients summing up to odd numbers considered in Example~\ref{ex.poly} is regular with no absorbers, and hence it can be localised to a brace-type truss of the following rational functions
$$
\qQ(O(x))=\frac{O(x)}{O(x)}:=\left\{\frac{p(x)}{q(x)}\;|\; p(x),q(x)\in O(x)\right\}.
$$

As a yet another example we can consider the truss $O(i)$ constructed as a special case of Example~\ref{ex.poly}. Again this is a commutative domain satisfying the Ore condition and with no absorbers, and hence 
$$
\begin{aligned}
\qQ(O(i)) &= \left\{\frac{m+ ni}{p +qi}\;|\; \mbox{$m+n$ and $p+q$ are odd integers}\right\}
\\
&= \left\{ \frac{m}{2p+1}+ \frac{n}{2q+1}i\;|\; p,q\in \ZZ, \; \mbox{$m+n$ is an odd integer}\right\}\, .
\end{aligned}
$$
\end{example}

The example of odd fractions described above is a special case of a more general construction.

\begin{example}\label{ex.matrix}
Let $T_n(\ZZ)$ denote the set of all $n\times n$-matrices over $\ZZ$ with odd entries on the diagonal and even off diagonal entries. That is,
$$
T_n(\ZZ) = \left\{ (a_{ij})_{i,j=1}^n \; |\; a_{ii}\in 2\ZZ +1\; \&\;  a_{ij}\in 2\ZZ,\; i\neq j\right\}.
$$
\begin{zlist}
\item $T_n(\ZZ)$ endowed with the matrix multiplication and the standard heap operation $[\bfa,\bfb,\bfc] = \bfa -\bfb +\bfc$ is a unital regular truss with no absorbers. 
\item The brace-type truss of fractions $\qQ(T_n(\ZZ))$ can be identified with the set $T_n(\QQ)$ of  $n\times n$-matrices over the rational numbers with diagonal entries made by the odd fractions (that is, fractions of both the numerator and denominator odd, $\qQ(2\ZZ+1)$) and with fractions with even numerator and odd denominator as off-diagonal entries. That is,
$$
\qQ(T_n(\ZZ))\cong T_n(\QQ) := \left\{ (q_{ij})_{i,j=1}^n \; |\; q_{ii}\in \frac{2\ZZ +1}{2\ZZ +1}\; \&\;  q_{ij}\in \frac{2\ZZ}{2\ZZ +1},\; i\neq j\right\}.
$$
\end{zlist}
\end{example}

It is clear that the set $T_n(\ZZ)$ is closed under the described heap operation. That it is closed also under the matrix multiplication follows from an observation that in the product formula for the off-diagonal entries the sum involves the products of numbers of which at least one is even, while for the diagonal entry there is a single odd summand made out of the product of matching diagonal entries. Obviously $T_n(\ZZ)$ has no absorber, as the zero matrix is not an element of $T_n(\ZZ)$. Since the identity matrix has the prescribed form, $T_n(\ZZ)$ is unital.
The other statements of Example~\ref{ex.matrix} can be justified by the following (elementary) lemma.

\begin{lemma}\label{lem.matrix}
For all $\bfa\in T_n(\ZZ)$,
\begin{rlist}
\item The determinant $\det (\bfa) $ is an odd number.
\item The matrix of cofactors $\bar{\bfa}$ of $\bfa$ and hence also its transpose  $\bar{\bfa}^t$ are elements of $T_n(\ZZ)$.
\end{rlist}
\end{lemma}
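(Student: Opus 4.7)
The plan is to pass to reduction modulo $2$, where the structure of $T_n(\ZZ)$ collapses to something trivially tractable. Concretely, every $\bfa\in T_n(\ZZ)$ reduces to the identity matrix $I_n$ in $\mathrm{Mat}_n(\FF_2)$, because the diagonal entries are odd and the off-diagonal entries are even. Since $\mathrm{Mat}_n(\ZZ)\to\mathrm{Mat}_n(\FF_2)$ is a ring homomorphism commuting with determinants, assertion (i) is immediate:
$$
\det(\bfa)\equiv \det(I_n)=1\pmod 2,
$$
so $\det(\bfa)$ is odd.

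For (ii), I would write $\bar\bfa_{ij}=(-1)^{i+j}\det(M_{ij})$, where $M_{ij}$ is the $(n-1)\times(n-1)$ minor obtained from $\bfa$ by deleting row $i$ and column $j$, and analyse two cases.

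In the \emph{diagonal} case $i=j$, deleting row $i$ and column $i$ from $\bfa$ preserves the property of having odd entries on the diagonal and even entries off it, so $M_{ii}\in T_{n-1}(\ZZ)$. By part (i) applied in dimension $n-1$, $\det(M_{ii})$ is odd, whence $\bar\bfa_{ii}$ is odd. In the \emph{off-diagonal} case $i\neq j$, reduction modulo $2$ sends $M_{ij}$ to the matrix obtained from $I_n$ by striking out row $i$ and column $j$. In that reduced matrix the former row $j$ of $I_n$ has its only nonzero entry (which was at column $j$) removed, so it becomes a zero row of the $(n-1)\times(n-1)$ matrix; hence its determinant is zero modulo $2$, i.e.\ $\det(M_{ij})$ is even. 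Thus the off-diagonal entries of $\bar\bfa$ are even.

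Putting the two cases together gives $\bar\bfa\in T_n(\ZZ)$. Since transposition fixes the diagonal and just permutes off-diagonal positions among themselves, it also preserves $T_n(\ZZ)$, so $\bar\bfa^t\in T_n(\ZZ)$ as well. The argument is essentially immediate once the mod-$2$ viewpoint is adopted; there is no real obstacle, the only thing to be careful about being a clean bookkeeping of which row (or column) of the reduced identity disappears when deleting row $i$ and column $j$ with $i\neq j$.
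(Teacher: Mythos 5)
Your proof is correct. For part (ii) your two cases coincide with the paper's argument: the diagonal minors lie in $T_{n-1}(\ZZ)$ and hence have odd determinant, while for $i\neq j$ the surviving row $j$ of the minor loses its unique odd entry when column $j$ is struck out, so the minor has an all-even row and even determinant. Where you genuinely diverge is part (i): the paper proves oddness of $\det(\bfa)$ by induction on $n$, expanding along the first row and using that $\bfa_{1,1}\in T_{n-1}(\ZZ)$ together with the evenness of the remaining first-row entries. Your reduction modulo $2$ --- observing that $\bfa$ maps to $I_n$ under the ring homomorphism $\mathrm{Mat}_n(\ZZ)\to \mathrm{Mat}_n(\FF_2)$, which commutes with $\det$ --- replaces the induction by a one-line computation and also delivers the statement uniformly in $n$, so that its use in dimension $n-1$ in part (ii) needs no separate justification. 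Both arguments are elementary; yours is slightly slicker, the paper's is self-contained at the level of explicit cofactor expansions and makes the parity bookkeeping visible, which is perhaps why the authors chose it.
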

\begin{proof}
Let $\bfa_{i,j}$ denote the matrix obtained from $\bfa$ by removing of the $i$-th row and the $j$-th column. Note that $\bfa_{i,i}\in T_{n-1}(\ZZ)$ and that $\bfa_{i,j}$, $i\neq j$ has one row of even numbers.

The first statement is proven by induction on the size $n$ of matrices. For $n=1$ the statement is obviously true. Assuming that the statement is true for $k$ we calculate the determinant of $\bfa \in T_{k+1}(\ZZ)$ by expanding by the first row. Since  $\bfa_{1,1}$  is an element of $T_k(\ZZ)$, $\det (\bfa_{1,1})$ is odd by inductive assumption. In the expansion of $\det (\bfa)$ this is multiplied by the first entry $a_{11}$ of $\bfa$ and thus it gives an odd number. All the remaining summands involve products of  other entries of the first row, which are even. Hence the sum of all terms in the expansion is odd as required.

The diagonal entries of $\bar{\bfa}$ are given by $\det (\bfa_{i,i})$ which are odd by statement (i). Off-diagonal entries $(-1)^{i+j}\det (\bfa_{i,j})$ are even since one row of  each of $\bfa_{i,j}$, $i\neq j$ consists entirely of even numbers. The transposition statement is obvious.
\end{proof}

With this lemma at hand we can now  prove that $T_n(\ZZ)$ is a domain satisfying the Ore condition. Since we can embed $T_n(\ZZ)$ into a ring of matrices, the statement $\bfa \bfb = \bfa \bfc$, for some $\bfa,\bfb,\bfc \in T_n(\ZZ)$ is equivalent to the statement that $\bfa (\bfb - \bfc) =0$, hence
$$
0 = \bfa (\bfb - \bfc) = \bar{\bfa}^t \bfa (\bfb - \bfc) = \det (\bfa) (\bfb -\bfc),
$$
which implies that $\bfb=\bfc$, since $\det(\bfa) \neq 0$ by Lemma~\ref{lem.matrix}(i). The regularity of the other side of each $\bfa\in T_n(\ZZ)$ can be proven in a symmetric way.

To prove the Ore condition we take any $\bfa, \bfb\in T_n(\ZZ)$ and set
$$
\mathbf{r} = \bfa \bar{\bfb}^t \quad \& \quad \mathbf{s} = \det(\bfb) \mathbf{1}.
$$
Both these matrices are elements of $T_n(\ZZ)$ by Lemma~\ref{lem.matrix}, and they satisfy the Ore condition 
$\mathbf{s} \bfa = \mathbf{r} \bfb$. Hence, $T_n(\ZZ)$ is a left regular (in fact also right regular by similar arguments) truss.

For any element $\bfq\in T_n(\QQ)$ we write $q$ for the product of all denominators in entries of $\bfq$. This is an odd number and thus obviously $q\bfq \in T_n(\ZZ)$. In particular, in view of Lemma~\ref{lem.matrix}, $\det(q\bfq )$ is an odd number and its matrix of cofactors is an element of $T_n(\ZZ)$. This in turn implies that the inverse of $\bfq$ is an element of $T_n(\ZZ)$ divided by an odd number, hence an element of $T_n(\QQ)$. Consequently, $T_n(\QQ)$ is group with respect to multiplication of matrices. In order to identify $T_n(\QQ)$ with the truss of fractions $\qQ(T_n(\ZZ))$ we will explore the universal property described in Proposition~\ref{prop.loc.univ}(2).  Thus consider a brace-type skew truss $B$ and a homomorphism of unital trusses $f: T_n(\ZZ)\lra B$ and set
$$
\hat{f}: T_n(\QQ)\lra B, \qquad \bfq \lto f(q\mathbf{1})^{-1}f(q\bfq).
$$
Note that this definition does not depend on the way the fractions in $\bfq$ are represented, as the multiplication of the numerator and a denominator of an entry by a common (odd) factor results in multiplying both $q$ and $\bfq$ by the same factor which will cancel each other out in the formula for $\hat{f}$, by the multiplicative property of $f$. Since $q\mathbf{1}$ is a central element in $T_n(\ZZ)$, $f(q\mathbf{1})^{-1}$ is central in the image of $f$ and, combined with the multiplicative property of $f$ this implies that $\hat{f}$ is a homomorphism of (multiplicative) groups. That $\hat{f}$ is a homomorphism of heaps follows by the distributivity. Obviously, $\hat{f}\circ \iota_{\mathbf{1}} = f$ and is  a unique such morphisms. By the uniqueness of universal objects, $T_n(\QQ)$ is isomorphic to the truss of fractions $\qQ(T_n(\ZZ))$.

\section*{Acknowledgement}
The research of Tomasz Brzezi\'nski  is partially supported by the National Science Centre, Poland, grant no. 2019/35/B/ST1/01115.

\end{document}